\documentclass[12pt]{amsart}

\setlength{\textwidth}{16cm}    
\setlength{\oddsidemargin}{0cm}  
\setlength{\evensidemargin}{0cm}
\addtolength{\textheight}{1,0cm}
\addtolength{\topmargin}{-0,5cm}

\usepackage{mathptmx} 
\DeclareSymbolFont{cmletters}{OML}{cmm}{m}{it}              
\DeclareSymbolFont{cmsymbols}{OMS}{cmsy}{m}{n}
\DeclareSymbolFont{cmlargesymbols}{OMX}{cmex}{m}{n}
\DeclareMathSymbol{\myjmath}{\mathord}{cmletters}{"7C}     \let\jmath\myjmath 
\DeclareMathSymbol{\myamalg}{\mathbin}{cmsymbols}{"71}     \let\amalg\myamalg
\DeclareMathSymbol{\mycoprod}{\mathop}{cmlargesymbols}{"60}\let\coprod\mycoprod
\DeclareMathSymbol{\myalpha}{\mathord}{cmletters}{"0B}     \let\alpha\myalpha 
\DeclareMathSymbol{\mybeta}{\mathord}{cmletters}{"0C}      \let\beta\mybeta
\DeclareMathSymbol{\mygamma}{\mathord}{cmletters}{"0D}     \let\gamma\mygamma
\DeclareMathSymbol{\mydelta}{\mathord}{cmletters}{"0E}     \let\delta\mydelta
\DeclareMathSymbol{\myepsilon}{\mathord}{cmletters}{"0F}   \let\epsilon\myepsilon
\DeclareMathSymbol{\myzeta}{\mathord}{cmletters}{"10}      \let\zeta\myzeta
\DeclareMathSymbol{\myeta}{\mathord}{cmletters}{"11}       \let\eta\myeta
\DeclareMathSymbol{\mytheta}{\mathord}{cmletters}{"12}     \let\theta\mytheta
\DeclareMathSymbol{\myiota}{\mathord}{cmletters}{"13}      \let\iota\myiota
\DeclareMathSymbol{\mykappa}{\mathord}{cmletters}{"14}     \let\kappa\mykappa
\DeclareMathSymbol{\mylambda}{\mathord}{cmletters}{"15}    \let\lambda\mylambda
\DeclareMathSymbol{\mymu}{\mathord}{cmletters}{"16}        \let\mu\mymu
\DeclareMathSymbol{\mynu}{\mathord}{cmletters}{"17}        \let\nu\mynu
\DeclareMathSymbol{\myxi}{\mathord}{cmletters}{"18}        \let\xi\myxi
\DeclareMathSymbol{\mypi}{\mathord}{cmletters}{"19}        \let\pi\mypi
\DeclareMathSymbol{\myrho}{\mathord}{cmletters}{"1A}       \let\rho\myrho
\DeclareMathSymbol{\mysigma}{\mathord}{cmletters}{"1B}     \let\sigma\mysigma
\DeclareMathSymbol{\mytau}{\mathord}{cmletters}{"1C}       \let\tau\mytau
\DeclareMathSymbol{\myupsilon}{\mathord}{cmletters}{"1D}   \let\upsilon\myupsilon
\DeclareMathSymbol{\myphi}{\mathord}{cmletters}{"1E}       \let\phi\myphi
\DeclareMathSymbol{\mychi}{\mathord}{cmletters}{"1F}       \let\chi\mychi
\DeclareMathSymbol{\mypsi}{\mathord}{cmletters}{"20}       \let\psi\mypsi
\DeclareMathSymbol{\myomega}{\mathord}{cmletters}{"21}     \let\omega\myomega
\DeclareMathSymbol{\myvarepsilon}{\mathord}{cmletters}{"22}\let\varepsilon\myvarepsilon
\DeclareMathSymbol{\myvartheta}{\mathord}{cmletters}{"23}  \let\vartheta\myvartheta
\DeclareMathSymbol{\myvarpi}{\mathord}{cmletters}{"24}     \let\varpi\myvarpi
\DeclareMathSymbol{\myvarrho}{\mathord}{cmletters}{"25}    \let\varrho\myvarrho
\DeclareMathSymbol{\myvarsigma}{\mathord}{cmletters}{"26}  \let\varsigma\myvarsigma
\DeclareMathSymbol{\myvarphi}{\mathord}{cmletters}{"27}    \let\varphi\myvarphi

\usepackage{amsthm,amsmath,amssymb,amscd,graphics,enumerate,latexsym,verbatim,graphicx,color}
\usepackage[all]{xy}
\usepackage{tikz}

\usetikzlibrary{matrix,arrows}

\theoremstyle{plain}
\newtheorem{thm}{Theorem}[section]
\newtheorem{cor}[thm]{Corollary}
\newtheorem{lemma}[thm]{Lemma}

\theoremstyle{definition}
\newtheorem{df}[thm]{Definition}

\newtheorem{rem}[thm]{Remark}
\newtheorem{ex}[thm]{Example}

\newcommand{\arst}{s\stackrel{a}\longrightarrow t}
\newcommand{\ses}[3]{0\rightarrow #1\rightarrow #2\rightarrow#3\rightarrow 0}

\DeclareMathOperator{\Mat}{Mat}

\DeclareMathOperator{\Gr}{Gr}

\DeclareMathOperator{\Hom}{Hom}
\DeclareMathOperator{\Ext}{Ext}

\DeclareMathOperator{\Rel}{Rel}

\def\0{{\bf 0}}
\def\A{{\mathbb A}}

\def\C{{\mathbb C}}
\def\F{{\mathbb F}}

\def\N{{\mathbb N}}
\def\P{{\mathbb P}}

\def\Z{{\mathbb Z}}

\def\cB{{\mathcal B}}

\def\cW{{\mathcal W}}

\def\Fun{{\F_1}}

\def\Gm{\mathbb G_m}

\def\int{\textup{int}}
\def\ad{\textup{ad}}

\def\1{\textbf{1}}

\def\ud{{\underline{d}}}
\def\ue{{\underline{e}}}

\def\udim{{\underline{\dim}\, }}

\def\min{{\textup{min\,}}}

\def\={\equiv}
\def\n={\equiv\hspace{-10,5pt}/\hspace{3,5pt}}

\newdir{ >}{{}*!/-5pt/@^{(}}\newcommand{\arincl}[1]{\ar@{ >->}@<-0,0ex>#1} 

\newcommand{\tinymat}[4]{\bigl( \begin{smallmatrix} #1 & #2 \\ #3 & #4 \end{smallmatrix} \bigr)}

\setcounter{tocdepth}{2}   

\makeatletter
\let\@wraptoccontribs\wraptoccontribs
\makeatother

\begin{document}

\title[Schubert decompositions for quiver Grassmannians of tree modules]{Schubert decompositions for quiver Grassmannians\\ of tree modules}
\author{Oliver Lorscheid}
\date{}
\address{IMPA, Estrada Dona Castorina 110, 22460-320 Rio de Janeiro, Brazil}
\email{lorschei@impa.br}
\contrib[with an appendix by]{Thorsten Weist}

\begin{abstract}
 Let $Q$ be a quiver, $M$ a representation of $Q$ with an ordered basis $\cB$ and $\ue$ a dimension vector for $Q$. In this note we extend the methods of \cite{L12} to establish Schubert decompositions of quiver Grassmannians $\Gr_\ue(M)$ into affine spaces to the ramified case, i.e.\ the canonical morphism $F:T\to Q$ from the coefficient quiver $T$ of $M$ w.r.t.\ $\cB$ is not necessarily unramified.

 In particular, we determine the Euler characteristic of $\Gr_\ue(M)$ as the number of \emph{extremal successor closed subsets of $T_0$}, which extends the results of Cerulli Irelli (\cite{Cerulli11}) and Haupt (\cite{Haupt12}) (under certain additional assumptions on $\cB$). 
\end{abstract}

\maketitle


\section*{Introduction}
\label{section: introduction}
 
 \noindent
 The recent interest in quiver Grassmannians stems from a formula of Caldero and Chapoton (\cite{Caldero-Chapoton06}) that relates cluster variables of a quiver $Q$ with the Euler characteristics of the quiver Grassmannians of exceptional modules of $Q$. Formulas for the Euler characteristics for a given quiver yields a description of the associated cluster algebra in terms of generators and relations. This opened a way to understand cluster algebras, which are defined by an infinite recursive procedure, in terms of closed formulas---provided one knows the Euler characteristics of the associated quiver Grassmannians.

\subsection*{Torus actions and cluster algebras associated with string algebras}
 While the classification of all cluster algebras seems to be as much out of reach as a classification of wild algebras, their is some hope to understand and classify cluster algebras that are associated with tame algebras. A first step in this direction has been realized by Cerulli-Irelli (\cite{Cerulli11}) and Haupt (\cite{Haupt12}) who established a formula for the Euler characteristics of quiver Grassmannians in the so-called unramified case. These results sufficed to understand all cluster algebras associated path with string algebras.

 We review the method of Cerulli-Irelli and Haupt in brevity: following Ringel (\cite{Ringel98}), every exceptional representation $M$ of a quiver $Q$ has tree basis $\cB$, i.e.\ the coefficient quiver $T=\Gamma(M,\cB)$ is a tree. A subset $\beta$ of $T_0=\cB$ is \emph{successor closed} if for all $i\in\beta$ and all arrows $\alpha:i\to j$ in $T$, also $j\in\beta$. A subset $\beta$ of $T_0=\cB$ is \emph{of type $\ue=(e_p)_{p\in Q_0}$} if $\#\beta\cap M_p=e_p$ for all $p\in Q_0$.

 If the canonical morphism $F:T\to Q$ is \emph{unramified}, i.e.\ the morphism of the underlying CW-complexes is locally injective, then one can define a (piecewise continuous) action of the torus $\Gm$ on $\Gr_\ue(M)$ that has only finitely many fixed points. This yields the formula
 \[
  \chi\bigl( \Gr_\ue(M) \bigr) \quad = \quad \# \ \{\,\text{fixed points}\,\} \quad = \quad \# \ \left\{\,\text{successor closed }\beta\subset T_0 \text{ of type }\ue \,\right\}.
 \]

 For other types of cluster algebras, the exceptional modules are in general not unramified tree modules. This is, for instance, the case of cluster algebras associated with exceptional Dynkin quivers of types $\widetilde D$ and $\widetilde E$, or, more general, for cluster algebras associated with clannish algebras or exceptional tame algebras. Therefore other methods are required to treat ramified tree modules.

\subsection*{Cluster algebras from marked surfaces}
 Fomin, Shapiro and Thurston explore in \cite{FST08} the connection between cluster algebras and marked surfaces. Namely to each surface with boundary and finitely many marked points such that each boundary component contains at least one marked point, one can associate a cluster algebra. In \cite{FST08}, it is shown that all cluster algebras associated with quivers of extended Dynkin types $\widetilde A$ and $\widetilde D$ come from marked surfaces.

 This connection with marked surfaces yields a description of the cluster variables in terms of triangulations of the surface, which leads to a combinatorial description of the algebra. For unpunctured surfaces, i.e.\ all marked points are contained in the boundary, Musiker, Schiffler and Williams construct in \cite{MSW13} a basis for the associated cluster algebra.

 Cluster algebras of punctured surfaces, which includes type $D$ algebras, are more difficult to treat since not all mutations of clusters come from flips of triangulations. For recent results in this direction, see Qiu and Zhou's paper \cite{Qiu-Zhou13}. However, these methods do not suffice yet for a complete understanding of the associated cluster algebras.

\subsection*{Schubert decompositions and ramified tree modules}
 Caldero and Reineke (\cite{Caldero-Reineke08}) show that $\Gr_\ue(M)$ is smooth projective if $M$ is exceptional. If $M$ is an equioriented string module, i.e.\ the coefficient quiver $T$ is an equioriented Dynkin quiver of type $A_n$, then $\Gr_\ue(M)$ has a continuous torus action with finitely many fixed points, see \cite{Cerulli11}. Thus if $M$ is an exceptional equioriented string module, then the Bia{\l}ynicki-Birula decomposition yields a decomposition of $\Gr_\ue(M)$ into affine spaces, cf.\ \cite[Thm.\ 4.3]{BB73}. 

 While a torus action with finitely many fixed points determines the Euler characteristic, a decomposition of $\Gr_\ue(M)$ into affine spaces determines the (additive structure of the) cohomology of $\Gr_\ue(M)$, which is a much stronger result. In particular, we re-obtain the Euler characteristic as the number of affine spaces occurring in the decomposition. However, the class of exceptional equioriented string modules is very limited. In particular, most exceptional modules of affine type $D$ are not of this kind. 

 In the author's paper \cite{L12}, we extend decompositions of $\Gr_\ue(M)$ into affine spaces to a larger class of quiver Grassmannians by a different method. Namely, the choice of an ordered basis $\cB$ of $M$ defines a decomposition of $\Gr_\ue(M)$ into Schubert cells, which are, in general, merely closed subsets of affine spaces. In certain cases, however, these Schubert cells are affine spaces themselves. The method of proof is to exhibit explicit presentations of Schubert cells in terms of generators and relations. 
 
 One requirement of \cite{L12} is that the morphism $F:T\to Q$ is unramified. It is the purpose of this note to extend the methods of \cite{L12} to ramified $F:T\to Q$. In particular, this extends, under the given additional assumptions, the formula of Cerulli-Irelli and Haupt to the ramified case. 

 As will be shown in the joint work \cite{Lorscheid-Weist} with Thorsten Weist, the results of this text are indeed applicable to all exceptional modules of affine type $\tilde D_n$, which yields combinatorial formulas for the Euler characteristics of $\Gr_\ue(M)$.

\subsection*{The main result of this text}
 An arrow $\alpha$ of $T$ \emph{extremal} if for every other arrow $\alpha'$ with $F(\alpha')=F(\alpha)$ either $s(\alpha) <s(\alpha')$ or $t(\alpha') <t(\alpha)$. A subset $\beta$ of $T_0$ \emph{extremal successor closed} if for every $i\in\beta$ and every extremal arrow $\alpha:i\to j$ in $T$, also $j\in\beta$.

 Under certain additional assumptions on $\cB$, the quiver Grassmannian $\Gr_\ue(M)$ decomposes into affine spaces (Theorem \ref{thm: main theorem}), and the parametrization of the non-empty Schubert cells yields the formula
 \[
  \chi\bigl( \Gr_\ue(M) \bigr) \quad = \quad \# \ \left\{\,\text{extremal successor closed }\beta\subset T_0 \text{ of type }\ue \,\right\}
 \]
 (Corollary \ref{cor: formula for the euler characteristic}).

\subsection*{Content overview}
 To keep the technical complexity as low as possible, we restrict ourselves in this text to tree modules over the complex numbers, though the methods work in the more general context of modules of tree extensions over arbitrary rings as considered in \cite{L12}. The technique of proof in the ramified case is essentially the same as the one used in \cite{L12}. But since the presentation of our results is different and simplified, we include all details.

 This text is organized as follows. In Section \ref{section: setup}, we review basic facts about quiver Grassmannians, their Schubert decompositions and tree modules. In Section \ref{section: presentation of schubert cells}, we describe generators and relations for a Schubert cell, which are labeled by \emph{relevant pairs} and \emph{relevant triples}, respectively. In Section \ref{section: preliminaries}, we introduce \emph{extremal successor closed subsets}, \emph{polarizations} and \emph{maximal relevant pairs}, and we establish preliminary facts. In Section \ref{section: schubert decompositions for tree modules}, we state the main results and conclude with several remarks and examples.

 In Appendix \ref{appendix} (by Thorsten Weist), we show how to establish polarizations for exceptional modules along Schofield induction.

\subsection*{Acknowledgements} 
 I would like to thank Dave Anderson, Giovanni Cerulli Irelli, Markus Reineke, Cec\'ilia Salgado and Jan Schroer for helpful discussions. I would like to thank Thorsten Weist for including his ideas on polarizations as an appendix to this text.

\section{Setup}
\label{section: setup}

 \noindent
To start with, let us explain the notation and terminology that we use in this text. By a variety we understand the space of complex points of an underlying scheme, and we broadly ignore the schematic structure of quiver Grassmannians. For more details on the notions in this section, see Sections 1 and 2 of \cite{L12}.

\subsection{Quiver Grassmannians}\label{subsection: quiver Grassmannians}

Let $Q=(Q_0,Q_1,s,t)$ be a quiver, $M=\bigl(\{M_i\}_{i\in Q_0},\{M_\alpha\}_{\alpha\in Q_1}\bigr)$ a (complex) representation of $Q$ with dimension vector $\ud=\udim M$ and $\ue\leq\ud$ another dimension vector for $Q$. The \emph{quiver Grassmannian $\Gr_\ue(M)$} is the set of subrepresentations $N$ of $M$ with $\udim N=\ue$. A \emph{basis $\cB$ for $M$} is the union $\bigcup_{p\in Q_0} \cB_p$ of bases $\cB_p$ for the vector spaces $M_p$. An \emph{ordered basis of $M$} is a basis $\cB$ of $M$ whose elements $b_1,\dotsc,b_n$ are linearly ordered. The  choice of an ordered basis yields an inclusion
\[
 \Gr_\ue(M) \quad \longrightarrow \quad \prod_{p\in Q_0} \ \Gr(e_p,d_p),
\]
that sends $N$ to $(N_p)_{p\in Q_0}$, which endows $\Gr_\ue(M)$ with the structure of a projective variety.

\subsection{Schubert decompositions}\label{subsection: schubert decompositions}

A point of the Grassmannian $\Gr(e,d)$ is an $e$-dimensional subspace $V$ of $\C^{d}$. Let $V$ be spanned by vectors $w_1,\dotsc,w_e\in\C^d$. We write $w=(w_{i,j})_{i=1\dotsc d,j=1\dotsc, e}$ for the matrix of all coordinates of $w_1,\dotsc,w_e$. The Pl\"ucker coordinates 
\[
 \Delta_I(V) \quad = \quad \det(w_{i,j})_{i\in I,j=1\dotsc, e}
\]
(where $I$ is a subset of $\{1,\dotsc,d\}$ of cardinality $e$) define a point $(\Delta_I(V))_I$ in $\P\bigl(\Lambda^{e}\C^{d}\bigr)$. For two ordered subsets $I=\{i_1,\dotsc,i_e\}$ and $J=\{j_1,\dotsc,j_e\}$ of $\{1,\dotsc,d\}$, we define $I\leq J$ if $i_l\leq j_l$ for all $l=1\dotsc,e$. The Schubert cell $C_I(d)$ of $\Gr(e,d)$ is defined as the locally closed subvariety of all subspaces $V$ such that $\Delta_I(V)\neq 0$ and $\Delta_J(V)=0$ for all $J>I$.

Given a quiver $Q$, a representation $M$ with ordered basis $\cB$ and a dimension vector $\ue$, we say that a subset $\beta$ of $\cB$ is \emph{of type $\ue$} if $\beta_p=\beta\cap\cB_p$ is of cardinality $e_p$ for every $p\in Q_0$. For $\ud=\udim M$, the Schubert cell $C_\beta(\ud)$ is defined as the locally closed subset $\prod_{p\in Q_0} C_{\beta_p}(d_p)$ of $\prod_{p\in Q_0} \ \Gr(e_p,d_p)$. The \emph{Schubert cell $C_\beta^M$} is defined as the intersection of $C_\beta(\ud)$ with $\Gr_\ue(M)$ inside $\prod_{p\in Q_0} C_{\beta_p}(d_p)$. The \emph{Schubert decomposition of $\Gr_\ue(M)$ (w.r.t.\ the ordered basis $\cB$)} is the decomposition
\[
 \Gr_\ue(M) \quad = \quad \coprod_{\substack{\beta\subset\cB\\\text{of type }\ue}} \ C_\beta^M
\]
into locally closed subvarieties. Note that the Schubert cells $C_\beta^M$ are affine varieties, but that they are, in general, not affine spaces. In particular, a Schubert cell $C_\beta^M$ might be empty. We say that $\Gr_\ue(M) = \coprod \ C_\beta^M$ is a \emph{decomposition into affine spaces} if every Schubert cell $C_\beta^M$ is either an affine space or empty.

\subsection{Tree modules}\label{subsection: tree modules}

Let $M$ be a representation of a quiver $Q$ with basis $\cB$. Let $\alpha:s\to t$ be an arrow of $Q$ and $b\in\cB_s$. Then we have the equations 
\[
 M_\alpha(b) \quad = \quad \sum_{b'\in\cB_t} \lambda_{\alpha,b,c} \, c 
\]
with uniquely determined coefficients $\lambda_{\alpha,b,c}\in\C$. The \emph{coefficient quiver of $M$ w.r.t.\ $\cB$} is the quiver $T=\Gamma(M,\cB)$ with vertex set $T_0=\cB$ and with arrow set
\[
 T_1 \quad = \quad \bigl\{ \ (\alpha,b,c)\in Q_1\times\cB\times\cB \ \bigl| \ b\in\cB_{s(\alpha)}, c\in\cB_{t(\alpha)}\text{ and }\lambda_{\alpha,b,c}\neq 0 \ \bigr\}.
\]
It comes together with a morphism $F:T\to Q$ that sends $b\in\cB_p$ to $p$ and $(\alpha,b,c)$ to $\alpha$, and with a thin sincere representation $N=N(M,\cB)$ of $T$ with basis $\cB$ and $1\times 1$-matrices $N_{(\alpha,b,c)}=(\lambda_{\alpha,b,c})$. Note that $M$ is canonically isomorphic to the push-forward $F_\ast N$ (cf.\ \cite[Section 4]{L12}).

The representation $M$ is called a \emph{tree module} if there exists a basis $\cB$ of $M$ such that the coefficient quiver $T=\Gamma(M,\cB)$ is a tree. We call such a basis a \emph{tree basis for $M$}.

Note that if $T$ is a tree, then we can replace the basis elements $b$ by certain non-zero multiples $b'$ such that all $\lambda_{\alpha,b,c}$ equal $1$. We refer to this assumption by the expression $M=F_\ast T$ where we identify $T$, by abuse of notation, with its thin sincere representation with basis $T_0=\cB$ and matrices $(1)$. In this case, $M$ and $\cB$ are determined as the push-forward of this thin sincere representation of $T$ along $F:T\to Q$. Note that $T$ is in general not determined by $M$: there are examples of tree modules $M$ and bases $\cB$ and $\cB'$ such that $\Gamma(M,\cB)$ and $\Gamma(M,\cB')$ are non-isomorphic trees.


\section{Presentations of Schubert cells}
\label{section: presentation of schubert cells}

\noindent
Let $Q$ be a quiver and $M$ a representation with ordered basis $\cB$ and dimension vector $\ud$. Let $\ue$ be another dimension vector for $Q$ and $\beta\subset\cB$ of type $\ue$. In this section, we will describe coordinates and relations for the Schubert cell $C_\beta^M$ of $\Gr_\ue(M)$.

\subsection{Normal form for matrix representations}\label{subsection: normal form of matrix representations}

Let $N$ be a point of $C_\beta^M$. Then $N_p$ is a $e_p$-dimensional subspace of $M_p$ for every $p\in Q_0$ and has a basis $(w_j)_{j\in\beta_p}$ where $w_j=(w_{i,j})_{i\in \cB_p}$ are column vectors in $M_p$. If we define $w_{i,j}=0$ for $i,j\in\cB$ whenever $j\in\beta$, or $i\in\cB_p$ and $j\in\cB_q$ with $p\neq q$, then we obtain a matrix $w=(w_{i,j})_{i,j\in\cB}$.
We call such a matrix $w$ a \emph{matrix representation of $N$}. Note that $N$ is determined by the matrix representation $w$, but there are in general many different matrix representations of $N$.

We say that a matrix $w=(w_{i,j})_{i,j\in\cB}$ in $\Mat_{\cB\times\cB}$ is \emph{in $\beta$-normal form} if 
 \begin{enumerate}
  \item\label{part1} $w_{i,i}=1$ for all $i\in\beta$,
  \item\label{part2} $w_{i,j}=0$ for all $i,j\in\beta$ with $j\neq i$,
  \item\label{part3} $w_{i,j}=0$ for all $i\in\cB$ and $j\in\beta$ with $j<i$,
  \item\label{part4} $w_{i,j}=0$ for all $i\in\cB$ and $j\in\cB-\beta$, and
  \item\label{part5} $w_{i,j}=0$ for all $i\in\cB_p$ and $j\in\beta_q$ with $p\neq q$.
 \end{enumerate}

\begin{lemma} \label{lemma: normal form}
 Every $N\in C_\beta^M$ has a unique matrix representation $w=(w_{i,j})_{i,j\in\cB}$ in $\beta$-normal form.
\end{lemma}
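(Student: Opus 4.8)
The plan is to prove existence and uniqueness of the $\beta$-normal form by the standard Gaussian-elimination argument, adapted to the ``quiver-componentwise'' setting. The key observation is that the five conditions decouple over the vertices $p\in Q_0$: conditions \eqref{part4} and \eqref{part5} say that a matrix representation $w$ is block-diagonal with respect to the partition $\cB=\bigsqcup_{p\in Q_0}\cB_p$ and has zero columns outside $\beta$, so $w$ is completely determined by its blocks $w^{(p)}=(w_{i,j})_{i\in\cB_p,\,j\in\beta_p}$, and the remaining conditions \eqref{part1}, \eqref{part2}, \eqref{part3} are conditions on each block separately. Thus it suffices to treat a single Grassmannian $\Gr(e_p,d_p)$: given an $e_p$-dimensional subspace $N_p\subset M_p=\C^{d_p}$ lying in the Schubert cell $C_{\beta_p}(d_p)$, I must show there is a unique matrix whose columns (indexed by $\beta_p$) span $N_p$, whose $\beta_p\times\beta_p$ minor is the identity, and which has zeros below the pivot positions, i.e.\ $w_{i,j}=0$ whenever $i\in\cB_p$, $j\in\beta_p$, $j<i$.

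First I would prove existence. Start from any spanning set $(v_j)_{j\in\beta_p}$ of $N_p$, assembled into a $d_p\times e_p$ matrix $v$. Because $N_p\in C_{\beta_p}(d_p)$, the Plücker coordinate $\Delta_{\beta_p}(N_p)$ is nonzero, i.e.\ the square submatrix of $v$ on the rows indexed by $\beta_p$ is invertible; replacing $v$ by $v\cdot(v_{\beta_p})^{-1}$ (column operations, which do not change $N_p$) yields a matrix $w$ whose $\beta_p$-rows form the identity matrix — this gives \eqref{part1} and \eqref{part2}. For \eqref{part3} I must argue that the entries strictly below the pivots already vanish; this is exactly where the vanishing $\Delta_J(N_p)=0$ for all $J>\beta_p$ is used. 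Concretely, if $i\notin\beta_p$ and $j\in\beta_p$ with $j<i$ and $w_{i,j}\neq 0$, then expanding along the appropriate rows shows the minor on the index set $J=(\beta_p\setminus\{j\})\cup\{i\}$ equals $\pm w_{i,j}\neq 0$, and one checks $J>\beta_p$, contradicting membership in the Schubert cell. Hence every entry forced to vanish by \eqref{part3} does vanish, and $w$ is in $\beta_p$-normal form.

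For uniqueness, suppose $w$ and $w'$ are two matrices in $\beta_p$-normal form with the same column span $N_p$. Then $w'=w\cdot A$ for some invertible $e_p\times e_p$ matrix $A$ (both column sets are bases of $N_p$, each of size $e_p$ since the pivot rows give independence). Restricting to the rows indexed by $\beta_p$ and using \eqref{part1}--\eqref{part2} for both $w$ and $w'$ gives $\mathrm{Id}=\mathrm{Id}\cdot A$, so $A=\mathrm{Id}$ and $w=w'$. Assembling the blocks back together over all $p\in Q_0$ yields the unique $\beta$-normal form of $N$, completing the proof. The only mild subtlety — and the step I would be most careful about — is the bookkeeping in the existence argument for \eqref{part3}: verifying that the swapped index set $J$ really satisfies $J>\beta_p$ in the order defined in \S\ref{subsection: schubert decompositions}, which amounts to checking that removing $j$ and inserting a larger index $i$ strictly increases the tuple position-by-position after re-sorting. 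This is a short combinatorial lemma about ordered subsets; everything else is routine linear algebra. (One should also note that, since $C_\beta^M$ is by definition the intersection of $\prod_p C_{\beta_p}(d_p)$ with $\Gr_\ue(M)$, a point $N\in C_\beta^M$ automatically has each $N_p$ in $C_{\beta_p}(d_p)$, so the per-vertex argument applies verbatim.)
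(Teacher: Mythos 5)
Your proof is correct and follows essentially the same route as the paper's: both reduce to a per-vertex argument, use the non-vanishing of $\Delta_{\beta_p}$ together with the vanishing of $\Delta_J$ for $J>\beta_p$ to produce a reduced column echelon form with pivots in the $\beta_p$ rows, and get uniqueness from the uniqueness of reduced column echelon form. You simply spell out the details (the column operations, the cofactor expansion giving $\Delta_J=\pm w_{i,j}$, and the order-theoretic check that $J>\beta_p$) that the paper leaves to the reader.
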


\begin{proof}
 The uniqueness follows from the fact that a matrix $w$ in $\beta$-normal form is in reduced column echelon form by \eqref{part1}--\eqref{part4}. The vanishing of the Pl\"ucker coordinates $\Delta_J(N_p)$ for $J>\beta_p$ and the non-vanishing of $\Delta_{\beta_p}(N_p)$ implies that we find pivot elements in the rows $i\in\beta_p$ for each $p\in Q_0$ for a matrix presentation $w$ of $N$ in reduced echelon form. This shows that there is a matrix presentation $w$ of $N$ that satisfies \eqref{part1}--\eqref{part4}. Since $\cB_p\subset N_p$, the matrix $w$ is a block matrix and satisfies \eqref{part5}.
\end{proof}

\subsection{Defining equations}\label{subsection: defining equations}

Lemma \ref{lemma: normal form} identifies $C_\beta^M$ with a subset of the affine matrix space $\Mat_{\cB\times\cB}$. The following lemma determines defining equations (next to equations \eqref{part1}--\eqref{part5} from Section \ref{subsection: normal form of matrix representations}) for $C_\beta^M$, which shows that $C_\beta^M$ is a closed subvariety of $\Mat_{\cB\times\cB}$.

Let $T=\Gamma(M,\cB)$ be the coefficient quiver of $M$ w.r.t.\ $\cB$ and $F:T\to Q$ the canonical morphism. Recall that $T_0=\cB$.

\begin{lemma}\label{lemma: key formula}
 A matrix $w=(w_{i,j})_{i,j\in\cB}$ in $\beta$-normal form is the matrix representation of a point $N$ of $C_\beta^M$ if and only if $w$ satisfies for all arrows $\overline\alpha\in Q_1$ and all vertices $s\in F^{-1}(s(\overline \alpha))$ and $t\in F^{-1}(t(\overline \alpha))$ the equation
 \[         \tag*{{$E(\overline\alpha,t,s):$}} \label{key formula}
  \sum_{\substack{\alpha\in F^{-1}(\overline\alpha)\\\text{with }t(\alpha)=t}}   w_{s(\alpha),s}   \qquad = \qquad    \sum_{\substack{\alpha\in F^{-1}(\overline\alpha)}}   w_{t,t(\alpha)} w_{s(\alpha),s}.
 \]
 If $t\in\beta$ or $s\notin\beta$, then $E(\overline\alpha,t,s)$ is satisfied for any $w$ in $\beta$-normal form.
\end{lemma}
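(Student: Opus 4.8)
The plan is to read a matrix $w$ in $\beta$-normal form as the datum of the subspaces $N_p=\vspan\{\,w_j\mid j\in\beta_p\,\}\subseteq M_p$ (the columns of $w$ indexed by $\beta_p$), to note that by \eqref{part1}--\eqref{part4} the matrix $w$ is in reduced column echelon form with pivots in the rows $\beta=\bigcup_p\beta_p$, so that $N=(N_p)_{p\in Q_0}$ is a point of $C_\beta(\ud)=\prod_p C_{\beta_p}(d_p)$ (as recalled in Section \ref{subsection: schubert decompositions}) and $w$ is its normal form in the sense of Lemma \ref{lemma: normal form}. Since $C_\beta^M=C_\beta(\ud)\cap\Gr_\ue(M)$ inside $\prod_p C_{\beta_p}(d_p)$, the matrix $w$ represents a point of $C_\beta^M$ if and only if $N$ is a subrepresentation of $M$, i.e.\ $M_{\overline\alpha}\bigl(N_{s(\overline\alpha)}\bigr)\subseteq N_{t(\overline\alpha)}$ for every arrow $\overline\alpha\in Q_1$. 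The whole lemma is then the translation of these containments into coordinates, and the argument is essentially bookkeeping with the coefficient quiver $T$ and the vanishing rules \eqref{part1}--\eqref{part5}.

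First I would write down the matrix of $M_{\overline\alpha}$ in the basis $\cB$. Abbreviating $p=s(\overline\alpha)$ and $q=t(\overline\alpha)$: because $M=F_\ast T$, the $(t,b)$-entry of $M_{\overline\alpha}$ (for $b\in\cB_p$, $t\in\cB_q$) is the number of arrows $\alpha\in F^{-1}(\overline\alpha)$ with $s(\alpha)=b$ and $t(\alpha)=t$, which is $0$ or $1$. Consequently, for $s\in\beta_p$ the $t$-th coordinate of $M_{\overline\alpha}(w_s)\in M_q$ equals $\sum_{\alpha\in F^{-1}(\overline\alpha),\,t(\alpha)=t}w_{s(\alpha),s}$, which is the left-hand side of $E(\overline\alpha,t,s)$.

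Next I would use the echelon shape of $w$ to detect membership in $N_q$. By \eqref{part1} and \eqref{part2} the columns $w_k$, $k\in\beta_q$, restrict to the standard basis on the rows indexed by $\beta_q$; hence projection of $M_q$ onto its $\beta_q$-coordinates restricts to an isomorphism $N_q\xrightarrow{\ \sim\ }\C^{\beta_q}$, and a vector $v\in M_q$ lies in $N_q$ if and only if $v_t=\sum_{k\in\beta_q}w_{t,k}\,v_k$ for every $t\in\cB_q$. Plugging in $v=M_{\overline\alpha}(w_s)$ together with the coordinate formula from the previous paragraph turns $M_{\overline\alpha}(w_s)\in N_q$ into $\sum_{\alpha,\,t(\alpha)=t}w_{s(\alpha),s}=\sum_{k\in\beta_q}w_{t,k}\sum_{\alpha,\,t(\alpha)=k}w_{s(\alpha),s}$ for all $t\in\cB_q$; since $w_{t,k}=0$ for $k\notin\beta$ by \eqref{part4}, the right-hand side is exactly $\sum_{\alpha\in F^{-1}(\overline\alpha)}w_{t,t(\alpha)}w_{s(\alpha),s}$, so this is precisely $E(\overline\alpha,t,s)$. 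As the columns $w_s$, $s\in\beta_p$, span $N_p$, it follows that $N$ is a subrepresentation of $M$ if and only if $E(\overline\alpha,t,s)$ holds for all $\overline\alpha\in Q_1$, all $t\in F^{-1}(t(\overline\alpha))$ and all $s\in\beta_{s(\overline\alpha)}$, which together with the first paragraph gives the asserted equivalence — once we also know that enlarging the range of $s$ changes nothing.

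Finally I would dispose of the automatic cases. If $s\notin\beta$, then $w_{s(\alpha),s}=0$ for every $\alpha$ by \eqref{part4}, so both sides of $E(\overline\alpha,t,s)$ vanish; this both shows that extending $s$ from $\beta_{s(\overline\alpha)}$ to all of $F^{-1}(s(\overline\alpha))$ adds no constraint and proves the last assertion when $s\notin\beta$. If $t\in\beta$, then \eqref{part1}, \eqref{part2} and \eqref{part4} give $w_{t,t(\alpha)}=1$ when $t(\alpha)=t$ and $w_{t,t(\alpha)}=0$ otherwise, so the right-hand side of $E(\overline\alpha,t,s)$ collapses to its left-hand side. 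I do not expect a genuine obstacle here: the only delicate point is to keep straight, at each step, which summations run over $\beta_q$ and which over all of $F^{-1}(t(\overline\alpha))$, and to invoke exactly the right rule among \eqref{part1}--\eqref{part5} — in particular the use of \eqref{part4} that replaces the restricted sum over $k\in\beta_q$ by the clean unrestricted sum over $\alpha\in F^{-1}(\overline\alpha)$ appearing in the statement.
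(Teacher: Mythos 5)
Your proof is correct and follows essentially the same route as the paper's: both translate the containment $M_{\overline\alpha}(N_{s(\overline\alpha)})\subseteq N_{t(\overline\alpha)}$ into the stated coordinate equation using the echelon form \eqref{part1}--\eqref{part4}, and both dispose of the cases $s\notin\beta$, $t\in\beta$ in the same way. The only cosmetic difference is that the paper introduces coefficients $\lambda_k$ with $M_{\overline\alpha}w_s=\sum_k\lambda_k w_k$ and solves for $\lambda_t$ from the rows $t\in\beta$, whereas you phrase the same step as the membership criterion $v\in N_q\Leftrightarrow v_t=\sum_{k\in\beta_q}w_{t,k}v_k$ coming from the projection $N_q\xrightarrow{\sim}\C^{\beta_q}$; the two formulations are interchangeable.
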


\begin{proof}
 Given a matrix $w=(w_{i,j})_{i,j\in\cB}$ in $\beta$-normal form, we write $w_i$ for the column vector $(w_{i,j})_{j\in\cB_p}$ where $p\in Q_0$ and $i\in\cB_p$. The matrix $w$ represents a point $N$ of $\Gr_\ue(M)$ if and only if for all $\overline\alpha\in Q_1$ and all $s\in\beta_{s(\overline\alpha)}$, there are $\lambda_k\in\C$ for $k\in \beta_{t(\overline\alpha)}$ such that
 \[
  M_{\overline\alpha} \, w_s \quad = \quad \sum_{k\in \beta_{t(\overline\alpha)}}\lambda_k w_k.
 \]
 This means that for all $t\in F^{-1}(t(\overline\alpha))$,
 \[
  \sum_{\substack{\alpha\in F^{-1}(\overline\alpha)\\\text{with }t(\alpha)=t}} w_{s(\alpha),s} \ = \ \bigl[ M_{\overline\alpha} \, w_s \bigr]_t \quad\text{equals}\quad \sum_{k\in\beta_{t(\overline\alpha)}} \lambda_k w_{t,k}.
 \]
 For $t\in\beta_{t(\overline\alpha)}$, we obtain that
 \[
    \sum_{\substack{\alpha\in F^{-1}(\overline\alpha)\\\text{with }t(\alpha)=t}} w_{s(\alpha),s} \quad = \quad \sum_{k\in\beta_{t(\overline\alpha)}} \lambda_k w_{t,k} \quad = \quad \sum_{k\in\beta_{t(\overline\alpha)}} \lambda_k \delta_{t,k} \quad = \quad \lambda_t
 \]
 by \eqref{part1} and \eqref{part2} for $w$ in $\beta$-normal form. Therefore, we obtain for arbitrary $t\in F^{-1}(t(\overline\alpha))$ that
 \[
  \sum_{\substack{\alpha\in F^{-1}(\overline\alpha)\\\text{with }t(\alpha)=t}} w_{s(\alpha),s} \quad = \quad \sum_{k\in\beta_{t(\overline\alpha)}} \Bigl( \sum_{\substack{\alpha\in F^{-1}(\overline\alpha)\\\text{with }t(\alpha)=k}} w_{s(\alpha),s}  \Bigr) w_{t,k} \quad = \quad \sum_{\substack{\alpha\in F^{-1}(\overline\alpha)}}   w_{t,t(\alpha)} w_{s(\alpha),s}
 \]
 as claimed. If $t\in\beta$, then this equation is satisfied for all $w$ in $\beta$-normal form by the definition of the $\lambda_k$ and since $w_{t,k}=\delta_{t,k}$ for $t\in\beta$. If $s\notin\beta$, then all coefficients $w_{s(\alpha),s}$ are $0$, i.e.\ we obtain the tautological equation $0=0$. This proves the latter claim of the lemma.
\end{proof}

\subsection{Relevant pairs and relevant triples}\label{subsection: relevant pairs and triples}

A \emph{relevant pair} is an element of the set
\[
 \Rel^2 \quad = \quad \{ \ (i,j) \in T_0\times T_0 \ | \ F(i)=F(j)\text{ and }i\leq j \ \}
\]
and an \emph{relevant triple} is an element of the set
\[
 \Rel^3 \quad = \quad \left\{ \ (\overline\alpha,t,s) \in Q_1\times T_0\times T_0 \ \left|  \begin{array}{c} \text{There is an }\alpha':s'\to t'\text{ in }T \text{ with }F(\alpha')=\overline\alpha, \\ F(s')=F(s),\, F(t')=F(t),\, s'\leq s\text{ and }t\leq t' \end{array} \right. \right\}.
\]

Given a matrix $w=(w_{i,j})$ in $\beta$-normal form, we say that $w_{i,j}$ is a \emph{constant coefficient (w.r.t.\ $\beta$)} if it appears in one of the equations \eqref{part1}--\eqref{part5} from Section \ref{subsection: normal form of matrix representations}, and otherwise we say that $w_{i,j}$ is a \emph{free coefficient (w.r.t.\ $\beta$)}, which is the case if and only if there is a $p\in Q_0$ such that $i\in\cB_p-\beta_p$, $j\in\beta_p$ and $i<j$. The significance of $\Rel^2$ is that if $w_{i,j}$ is not constant equal to $0$ w.r.t.\ $\beta$ (for any $\beta$), then $(i,j)$ is a relevant pair. 

If we substitute for a given $\beta$ all constant coefficients $w_{i,j}$ with $i\neq j$ by $0$, then we obtain \emph{$\beta$-reduced form of $E(\overline\alpha,t,s)$}:
\begin{equation}\label{eq: reduced key formula}
 \sum_{\substack{\alpha\in F^{-1}(\overline\alpha)\text{ with}\\ t(\alpha)=t,\ s(\alpha)\leq s,\\ s(\alpha)\notin\beta\text{ or }s(\alpha)=s}}   w_{s(\alpha),s}   \qquad = \qquad    
 \sum_{\substack{\alpha\in F^{-1}(\overline\alpha)\text{ with}\\ t< t(\alpha),\ s(\alpha)< s,\\ t(\alpha)\in\beta,\ s(\alpha)\notin\beta}}   w_{t,t(\alpha)} w_{s(\alpha),s} \quad + \quad \sum_{\substack{\alpha\in F^{-1}(\overline\alpha)\text{ with}\\ s(\alpha)=s,\ t\leq t(\alpha),\\ t(\alpha)\in\beta\text{ or }t(\alpha)=t}} w_{t,t(\alpha)},
\end{equation}

The significance of $\Rel^3$ is that if $E(\overline\alpha,t,s)$ is a non-trivial equation in the coefficients of a matrix $w$ in $\beta$-normal form (for any $\beta$), then $(\overline\alpha,t,s)$ is a relevant triple. 

In the following, we will associate certain values with relevant pairs and relevant triples. Since $T_0=\cB$ is linearly ordered, we can identify it order-preservative with $\{1,\dotsc,n\}$. We define the root of a connected component of $T$ as its smallest vertex, and we denote by $r(i)$ the root of the component that contains the vertex $i$. In particular, if $T$ is connected, then $1$ is the only root and $r(i)=1$ for all $i\in T_0$. Let $d(i,j)$ denote the graph distance of two vertices $i,j\in T_0$. We define the \emph{root distance of a relevant pair $(i,j)$} as 
\[
 \delta(i,j) \quad = \quad \max \, \bigr\{\, d\bigl(i,r(i)\bigr),\, d\bigl(j,r(j)\bigr) \, \bigr\}.
\]
We define the \emph{fibre length of a relevant pair $(i,j)$} as 
\[
 \epsilon(i,j) \quad = \quad \# \, \bigr\{ \ k \in T_0 \ \bigl| \ F(k)=F(i)\text{ and }i\leq k < j \ \bigr\}.
\]
We consider $\N\times\N\times T_0$ with its lexicographical order, i.e.\ $(i,j,k)<(i',j',k')$ if $i<i'$, or $i=i'$ and $j<j'$, or $i=i'$, $j=j'$ and $k<k'$. The inclusion 
\[
 \begin{array}{cccc}
    \Psi: & \Rel^2 & \longrightarrow & \N\times\N\times T_0 \\
          & (i,j) & \longmapsto      & \bigl(\epsilon(i,j),\delta(i,j),j\bigr) 
 \end{array}
\]
induces a linear order on $\Rel^2$, i.e.\ $(i,j)<(i',j')$ if $\Psi(i,j)<\Psi(i',j')$.

Let $(\overline\alpha, t,s)$ be a relevant triple. We define $\Psi(\overline\alpha, t,s)$ as the maximum of $\Psi(s_{\min},s)$ and $\Psi(t,t_{\max})$ where $s_{\min}$ is the smallest vertex that is the source of an arrow $\alpha\in F^{-1}(\overline\alpha)$ with $t\leq t(\alpha)$ and $t_{\max}$ is the largest vertex that is the target of an arrow $\alpha\in F^{-1}(\overline\alpha)$ with $s(\alpha)\leq s$. 

For a relevant triple $(\overline\alpha, t,s)$ with $t\notin\beta$ and $s\in\beta$, we define $\Psi_\beta(\overline\alpha,t,s)$ as $\Psi(i,j)$ where $(i,j)$ is the largest relevant pair that appears as an index in the $\beta$-reduced form \eqref{eq: reduced key formula} of $E(\overline\alpha, t,s)$. Note that $E(\overline\alpha, t,s)$ contains at least one non-trivial term by the definition of a relevant triple. Note further that if there is an arrow $\alpha:s\to t$ in $F^{-1}(\overline\alpha)$ and every other arrow $\alpha'\in F^{-1}(\overline\alpha)$ satisfies either $s<s(\alpha')$ or $t(\alpha')<t$, then the only non-trivial terms in \eqref{eq: reduced key formula} are the constant coefficients $w_{s,s}$ and $w_{t,t}$. Thus in this case $\Psi_\beta(\overline\alpha,t,s)=\max\{\Psi(s,s),\Psi(t,t)\}$.

Since $w_{i,j}=0$ if $j<i$ for $w$ in $\beta$-normal form, we have $\Psi_\beta(\overline\alpha, t,s)\leq\Psi(\overline\alpha, t,s)$. In Section \ref{subsection: maximal relevant pairs}, we consider cases in which $\Psi_\beta(\overline\alpha, t,s)$ and $\Psi(\overline\alpha, t,s)$ are equal.

\begin{ex}
 A good example to illustrate the roles of relevant pairs, relevant triples and the function $\Psi$ is the following. Let $M$ be the preinjective representation of the Kronecker quiver $Q=K(2)$ with dimension vector $(3,4)$. Denote the two arrows of $Q$ by $\overline\alpha$ and $\overline\gamma$. Then there exists an ordered basis $\cB$ of $M$ such that the coefficient quiver $T=\Gamma(M,\cB)$ looks like
\[
   \begin{tikzpicture}[description/.style={fill=white,inner sep=0pt}]
  \matrix (m) [matrix of math nodes, row sep=3em, column sep=1.5em, text height=1ex, text depth=0ex]
   {   & 1 &   & 2 &   & 3     \\
     4 &   & 5 &   & 6 &   & 7 \\};
   \path[->,font=\scriptsize]
   (m-1-2) edge node[auto,swap] {$\overline\alpha$} (m-2-1)
   (m-1-2) edge node[auto] {$\overline\gamma$} (m-2-3)
   (m-1-4) edge node[auto,swap] {$\overline\alpha$} (m-2-3)
   (m-1-4) edge node[auto] {$\overline\gamma$} (m-2-5)
   (m-1-6) edge node[auto,swap] {$\overline\alpha$} (m-2-5)
   (m-1-6) edge node[auto] {$\overline\gamma$} (m-2-7);
  \end{tikzpicture}
\]
where we label the arrows by their image under $F$. We investigate the Schubert cell $C_\beta^M$ for $\beta=\{3,6,7\}$. A matrix $w=(w_{i,j})_{i,j\in\cB}$ in $\beta$-normal form has the six free coefficients $w_{1,3}$, $w_{2,3}$, $w_{4,6}$, $w_{5,6}$, $w_{4,7}$, $w_{5,7}$, and $w_{3,3}=w_{6,6}=w_{7,7}=1$. All other coefficients vanish. The non-trivial equations on the free coefficients are labeled by the relevant triples $(\overline\alpha,5,3)$, $(\overline\alpha,4,3)$, $(\overline\gamma,5,3)$ and $(\overline\gamma,4,3)$, and their respective $\beta$-reduced forms are
\[
 w_{2,3} \ = \ w_{5,6}, \qquad w_{1,3} \ = \ w_{4,6}, \qquad w_{1,3} \ = \ w_{2,3}w_{5,6} + w_{5,7} \quad \text{and} \quad 0 \ = \ w_{2,3} w_{4,6} + w_{4,7}.
\]
It is easy to see that these equations can be solved successively in linear terms. We show how these equations are organized by the ordering of $\Rel^2$ defined by $\Psi$. The relevant pairs that appear as indices of free coefficients are ordered as follows: 
\[
 (5,6) \ < \ (2,3) \ < \ (4,6) \ < \ (1,3) \ < \ (5,7) \ < \ (4,7).
\]
Ordered by size, we have
\[
 \Psi_\beta(\overline\alpha,5,3) = (2,3), \quad \Psi_\beta(\overline\alpha,4,3) = (1,3), \quad \Psi_\beta(\overline\gamma,5,3) = (5,7), \quad \Psi_\beta(\overline\gamma,4,3) = (4,7),
\]
which correspond to the indices of linear terms in each of the corresponding equations. Therefore, we find a unique solution in $w_{2,3}$, $w_{1,3}$, $w_{5,7}$ and $w_{4,7}$ for every $w_{5,6}$ and $w_{4,6}$, which shows that $C_\beta^M$ is isomorphic to $\A^2$.

This demonstrates how the ordering of relevant pairs organizes the defining equations for $C_\beta^M$ in a way that they are successively solvable in a linear term. In the following section, we will develop criteria under which this example generalizes to other representations $M$ and ordered bases $\cB$.
\end{ex}

\begin{rem}\label{rem: different orderings}
 The definition of $\Psi$ is based on heuristics with random examples of tree modules with ordered $F:T\to Q$. It is possible that different orders of $\Rel^2$  lead to analogues of Theorem \ref{thm: main theorem} that include quiver Grassmannians not covered in this text. Interesting variants might include the graph distance $d(i,j)$ of $i$ and $j$ as an ordering criterion; e.g.\ consider the ordering of $\Rel^2$ given by the map $\tilde\Psi:\Rel^2\to\N\times\N\to T_0$ with $\tilde\Psi(i,j)=(d(i,j),\epsilon(i,j),j)$. This might be of particular interest for exceptional modules that do not have an ordered tree basis such that $F:T\to Q$ is ordered. See, however, Section \ref{subsection: limiting examples} for some limiting examples.
\end{rem}


\section{Preliminaries for the main theorem}
\label{section: preliminaries}

\noindent
In this section, we develop the terminology and establish preliminary facts to formulate and prove the main theorem in Section \ref{section: schubert decompositions for tree modules}. As before, we let $Q$ be a quiver and $M$ a representation with ordered basis $\cB$ and dimension vector $\ud$. Let $\ue$ be another dimension vector for $Q$ and $\beta\subset\cB$ of type $\ue$. Let $T=\Gamma(M,\cB)$ be the coefficient quiver of $M$ w.r.t.\ $\cB$ and $F:T\to Q$ the canonical morphism. We identify the linearly ordered set $T_0=\cB$ with $\{1,\dotsc,n\}$.

\subsection{Extremal successor closed subsets}\label{subsection: extremal successor closed subsets}

An arrow $\alpha:s\to t$ in $T$ is called \emph{extremal (w.r.t.\ $F$)} if all other arrows $\alpha':s'\to t'$ with $F(\alpha')=F(\alpha)$ satisfy that either $s<s'$ or $t'<t$. Note that if $F$ is ordered and unramified, then every arrow of $T$ is extremal.

Recall that $T_0=\cB$, which allows us to consider $\beta$ as a subset of $T_0$. We say that $\beta$ is \emph{extremal successor closed} if for all extremal arrows $\alpha:s\to t$ of $T$, either $s\notin\beta$ or $t\in\beta$. Note that if $F$ is ordered and unramified, then $\beta$ is extremal successor closed if and only if $\beta$ is successor closed in the sense of \cite{Cerulli11} and \cite{Haupt12}. 

\begin{lemma}\label{lemma: not empty implies extremal successor closed}
 If $\beta$ is not extremal successor closed, then $C_\beta^M$ is empty.
\end{lemma}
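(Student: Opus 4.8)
The plan is to show the contrapositive in a slightly stronger form: if $C_\beta^M$ is nonempty, then $\beta$ is extremal successor closed. So suppose $N\in C_\beta^M$ with matrix representation $w=(w_{i,j})_{i,j\in\cB}$ in $\beta$-normal form (Lemma \ref{lemma: normal form}), and suppose $\alpha_0:s\to t$ is an extremal arrow of $T$ with $s\in\beta$; I must deduce $t\in\beta$. Write $\overline\alpha=F(\alpha_0)$, so $t\in F^{-1}(t(\overline\alpha))$ and $s\in F^{-1}(s(\overline\alpha))\cap\beta$. The natural thing is to feed this data into the equation $E(\overline\alpha,t,s)$ from Lemma \ref{lemma: key formula}, which $w$ satisfies since $w$ is a matrix representation of a point of $C_\beta^M$.

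The key computation is to examine $E(\overline\alpha,t,s)$ under the assumption $t\notin\beta$ and derive a contradiction. On the left-hand side, $\sum_{\alpha\in F^{-1}(\overline\alpha),\,t(\alpha)=t} w_{s(\alpha),s}$: since $s\in\beta$, properties \eqref{part1}--\eqref{part5} of $\beta$-normal form force $w_{s(\alpha),s}=0$ unless $s(\alpha)=s$ (as $s(\alpha)\in\beta$ with $s(\alpha)\neq s$ gives $0$, and $s(\alpha)\notin\beta$ with $s(\alpha)<s$ or... actually $w_{i,j}=0$ for $j\in\beta$, $j<i$, so we need $s(\alpha)\le s$, i.e. $s(\alpha)<s$ or $s(\alpha)=s$; if $s(\alpha)<s$ and $s(\alpha)\notin\beta$ the coefficient can be nonzero). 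Here the extremality of $\alpha_0$ enters decisively: any other arrow $\alpha\in F^{-1}(\overline\alpha)$ with $t(\alpha)=t$ satisfies $s<s(\alpha)$ or $t(\alpha)<t$; the latter is impossible since $t(\alpha)=t$, so $s<s(\alpha)$, and then $w_{s(\alpha),s}=0$ because in $\beta$-normal form $w_{i,j}=0$ when $j\in\beta$ and $j<i$. Hence the left side collapses to the single term $w_{s,s}=1$. On the right-hand side, every term $w_{t,t(\alpha)}w_{s(\alpha),s}$ has a factor $w_{t,\bullet}$ with $t\notin\beta$, hence $w_{t,t(\alpha)}=0$ by \eqref{part4} unless $t(\alpha)=t$... but $t\notin\beta$ and \eqref{part4} says $w_{i,j}=0$ for $j\in\cB-\beta$, so $w_{t,t(\alpha)}$ with $t(\alpha)\notin\beta$ vanishes, and if $t(\alpha)\in\beta$ then $w_{s(\alpha),s}$: again by extremality, $t(\alpha)\in\beta$ forces (since $t\notin\beta$ means $t(\alpha)\neq t$... wait need $t\le t(\alpha)$ for nonvanishing via... let me not over-grind) — the upshot is that the right-hand side is $0$, giving $1=0$, the desired contradiction.

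Thus the main obstacle is bookkeeping: carefully checking, term by term, which $w_{i,j}$ vanish by which clause of $\beta$-normal form, and pinpointing exactly where extremality of $\alpha_0$ is used (it is used precisely to kill the "other" arrows over $\overline\alpha$ on both sides so that only $w_{s,s}=1$ survives on the left and nothing survives on the right). A clean way to organize this is to pass to the $\beta$-reduced form \eqref{eq: reduced key formula} of $E(\overline\alpha,t,s)$ with $t\notin\beta$, $s\in\beta$: the last summand on the right is $\sum_{s(\alpha)=s,\,t\le t(\alpha),\,t(\alpha)\in\beta\text{ or }t(\alpha)=t} w_{t,t(\alpha)}$; since $t\notin\beta$ we need $t(\alpha)=t$, but then $w_{t,t}$ with $t\notin\beta$ is $0$ by \eqref{part4}, so that summand vanishes; the middle summand has $t(\alpha)\in\beta$ with $t<t(\alpha)$, but by extremality of $\alpha_0$ (whose target is $t$ and source is $s=s(\alpha)$ here, wait — one must check the arrow with $s(\alpha)=s$ is $\alpha_0$ itself or handle it) one shows no such $\alpha$ contributes; and the left summand, by extremality, reduces to $w_{s,s}=1$. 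So \eqref{eq: reduced key formula} becomes $1=0$, contradiction, whence $t\in\beta$. Since $N$ and the extremal arrow were arbitrary, $\beta$ is extremal successor closed, which is the contrapositive of the claim.

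\begin{proof}
We prove the contrapositive: if $C_\beta^M\neq\emptyset$, then $\beta$ is extremal successor closed. Let $N\in C_\beta^M$ with matrix representation $w=(w_{i,j})_{i,j\in\cB}$ in $\beta$-normal form (Lemma \ref{lemma: normal form}), and let $\alpha_0:s\to t$ be an extremal arrow of $T$ with $s\in\beta$. We must show $t\in\beta$; assume for contradiction that $t\notin\beta$. Put $\overline\alpha=F(\alpha_0)$, so $t\in F^{-1}(t(\overline\alpha))$ and $s\in F^{-1}(s(\overline\alpha))$, and consider the equation $E(\overline\alpha,t,s)$ of Lemma \ref{lemma: key formula}, which $w$ satisfies since it represents a point of $C_\beta^M$. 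As $t\notin\beta$ and $s\in\beta$, we pass to its $\beta$-reduced form \eqref{eq: reduced key formula}.

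We analyze the three sums in \eqref{eq: reduced key formula}. In the left-hand sum, an index $\alpha\in F^{-1}(\overline\alpha)$ with $t(\alpha)=t$, $s(\alpha)\le s$ and $s(\alpha)\notin\beta$ or $s(\alpha)=s$ contributes $w_{s(\alpha),s}$. If $\alpha\neq\alpha_0$, extremality of $\alpha_0$ gives $s<s(\alpha)$ or $t(\alpha)<t$; the latter is excluded by $t(\alpha)=t$, so $s<s(\alpha)$, contradicting $s(\alpha)\le s$. Hence $\alpha=\alpha_0$ is the only index, contributing $w_{s,s}=1$ by \eqref{part1}. In the last right-hand sum, an index $\alpha$ has $s(\alpha)=s$, $t\le t(\alpha)$, and $t(\alpha)\in\beta$ or $t(\alpha)=t$; since $t\notin\beta$ the only possibility is $t(\alpha)=t$, and then $w_{t,t}=0$ by \eqref{part4} because $t\in\cB-\beta$, so this sum is empty. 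In the middle right-hand sum, an index $\alpha$ has $t<t(\alpha)$ with $t(\alpha)\in\beta$ and $s(\alpha)<s$ with $s(\alpha)\notin\beta$; but then $\alpha\neq\alpha_0$ (as $s(\alpha)<s=s(\alpha_0)$), so extremality of $\alpha_0$ gives $s<s(\alpha)$ or $t(\alpha)<t$, contradicting $s(\alpha)<s$ and $t<t(\alpha)$ respectively. Hence this sum is empty as well.

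Therefore \eqref{eq: reduced key formula} reads $1=0$, a contradiction. We conclude $t\in\beta$. Since $N\in C_\beta^M$ and the extremal arrow $\alpha_0$ were arbitrary, $\beta$ is extremal successor closed.
\end{proof}
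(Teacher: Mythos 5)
Your proof is correct and follows essentially the paper's argument: prove the contrapositive, feed the extremal arrow $\alpha_0\colon s\to t$ into the equation $E(\overline\alpha,t,s)$ of Lemma~\ref{lemma: key formula}, and use extremality to kill every other arrow in $F^{-1}(\overline\alpha)$ so that the $\beta$-reduced form collapses to $1=0$ when $s\in\beta$ and $t\notin\beta$. One small repair: in your treatment of the third summand of \eqref{eq: reduced key formula}, the phrase ``since $t\notin\beta$ the only possibility is $t(\alpha)=t$'' does not by itself justify the claim, because an index $\alpha$ with $s(\alpha)=s$, $t<t(\alpha)$ and $t(\alpha)\in\beta$ is not excluded by $t\notin\beta$, and the corresponding $w_{t,t(\alpha)}$ would be a \emph{free} coefficient, not a constant $0$; what actually rules out such an $\alpha$ is again extremality of $\alpha_0$, since $s(\alpha)=s=s(\alpha_0)$ forces $t(\alpha)<t$, contradicting $t\leq t(\alpha)$. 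With that one sentence corrected, the proof matches the paper's in substance, only spelling out the three summands of \eqref{eq: reduced key formula} more explicitly than the paper, which compresses the reduction to $w_{s,s}=w_{t,t}w_{s,s}$.
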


\begin{proof}
 We assume that $C_\beta^M$ is non-empty and prove the lemma by contraposition. Let $\alpha:s\to t$ be an extremal arrow in $T$ and $\overline\alpha=F(\alpha)$. Let $N\in C_\beta^M$ have the matrix representation $w$ in $\beta$-normal form. The $\beta$-reduced form of $E(\overline\alpha,t,s)$ is
 \[
  w_{s,s} \quad = \quad w_{t,t} w_{s,s}
 \]
 since $\alpha:s\to t$ is extremal and thus for every other $\alpha':s'\to t'$ in $F^{-1}(\overline\alpha)$ either $s<s'$ and thus $w_{s',s}=0$ or $t'<t$ and thus $w_{t',t}=0$. Since $w_{s,s}=1$ if $s\in\beta$ (according to \eqref{part1}) and $w_{t,t}=0$ if $t\notin\beta$ (according to \eqref{part4}), equation $E(\overline\alpha,t,s)$ would be $1=0$ if $s\in\beta$ and $t\notin\beta$. This is not possible since we assumed that $C_\beta$ is non-empty. Therefore $s\notin\beta$ or $t\in\beta$, which shows that $\beta$ is extremal successor closed.
\end{proof}

\subsection{Ordered and ramified morphisms}\label{subsection: ordered morphisms}

The morphism $F:T\to Q$ is \emph{ordered} if for all arrows $\alpha:s\to t$ and $\alpha':s'\to t'$ of $T$ with $F(\alpha)=F(\alpha')$, we have $s\leq s'$ if and only if $t\leq t'$.

Consider an arrow $\overline\alpha\in Q_1$ and a vertex $i\in T_0$. The \emph{ramification index $r_{\overline\alpha}(i)$ at $i$ in direction $\overline\alpha$} is the number of arrows $\alpha\in F^{-1}(\overline\alpha)$ with source or target $i$. If $r_{\overline\alpha}(i)>1$, we say that \emph{$F$ branches at $i$ in direction $\overline\alpha$} and that \emph{$F$ ramifies above $F(i)$}. The morphism $F:T\to Q$ is \emph{unramified} or a \emph{winding} if for all $\overline\alpha\in Q_1$ and all $i\in T_0$, we have $r_{\overline\alpha}(i)\leq 1$. In other words, $F:T\to Q$ is unramified if and only if the associated map of CW-complexes is unramified. 

Note that $F$ is strictly ordered (in the sense of \cite[Section 4.2]{L12}) if and only if $F$ is ordered and unramified. From this viewpoint, we can say that we extend Theorem 4.2 of \cite{L12} from unramified morphisms $F:T\to Q$ to ramified $F$ in this text.

\subsection{Polarizations}\label{subsection: polarizations}

Let $I=\{i_1,\dotsc,i_r\}$ be a finite ordered set with $i_1<\dotsb<i_r$. A \emph{sorting of $I$} is a decomposition $I=I^<\amalg I^>$ such that $I^<=\{i_1,\dotsc,i_s\}$ and $I^>=\{i_{s+1},\dotsc,i_r\}$ for some $s\in\{1,\dotsc,r-1\}$. A \emph{polarization for a linear map $M_{\overline\alpha}:M_p\to M_q$} (between finite dimensional complex vector spaces) are ordered bases $\cB_p$ and $\cB_q$ for $M_p$ and $M_q$, respectively, that admit sortings $\cB_p=\cB_{p,\overline\alpha}^<\amalg\cB_{p,\overline\alpha}^>$ and $\cB_q=\cB_{q,\overline\alpha}^<\amalg\cB_{q,\overline\alpha}^>$ such that $M_{\overline\alpha}$ restricts to a surjection $\cB_{p,\overline\alpha}^<\cup\{0\}\twoheadrightarrow \cB_{q,\overline\alpha}^<\cup\{0\}$ and its adjoint map $M_{\overline\alpha}^\ad$ restricts to a surjection $\cB_{q,\overline\alpha}^>\cup\{0\}\twoheadrightarrow\cB_{p,\overline\alpha}^>\cup\{0\}$. We call these decompositions of $\cB_p$ and $\cB_q$ a \emph{sorting for $M_{\overline\alpha}$}.

Let $M$ be a representation of $Q$. A \emph{polarization for $M$} is an ordered basis $\cB$ of $M$ such that $\cB_p$ and $\cB_q$ are a polarization for every arrow $\overline\alpha:p\to q$ in $Q$. In this case, we also say that \emph{$M$ is polarized by $\cB$}. An \emph{ordered polarization of $M$} is a polarization $\cB$ such that the canonical morphism $F:T\to Q$ from the coefficient quiver is ordered.

In other words, $M$ is polarized by $\cB$ if and only if there are for all arrows $\overline\alpha:p\to q$ in $Q$ sortings $\cB_p=\cB_{p,\overline\alpha}^<\amalg\cB_{p,\overline\alpha}^>$ and $\cB_q=\cB_{q,\overline\alpha}^<\amalg\cB_{q,\overline\alpha}^>$ such that $r_{\overline\alpha}(i)\leq 1$ for all $i\in \cB_{p,\overline\alpha}^<\amalg\cB_{q,\overline\alpha}^>$ and $r_{\overline\alpha}(i)\geq 1$ for all $i\in \cB_{q,\overline\alpha}^<\amalg\cB_{p,\overline\alpha}^>$. This means that the non-zero matrix coefficients of $M_{\overline\alpha}$ w.r.t.\ $\cB_p$ and $\cB_q$ can be covered by an upper left submatrix $M_{\overline\alpha}^<$ and a lower right submatrix $M_{\overline\alpha}^>$ where $M_{\overline\alpha}^<$ has at most one non-zero entry in each column and at least one non-zero entry in each row while $M_{\overline\alpha}^>$ has at least one non-zero entry in each column and at most one non-zero entry in each row.

The following figure illustrates the typical shape of a fibre of an arrow $\overline\alpha:p\to q$ of $Q$ in the coefficient quiver $T=\Gamma(M,\cB)$ where $\cB$ is an ordered polarization for $M$. We use the convention that we order the vertices from left to right in growing order. The property that $\cB$ is a polarization is visible by the number of arrows connecting to a vertex in the upper left / lower left / upper right / lower right of the picture, and the property that $F:T\to Q$ is ordered is visible from the fact that the arrows do not cross each other.
\[
   \begin{tikzpicture}[description/.style={fill=white,inner sep=0pt}]
  \matrix (m) [matrix of math nodes, row sep=2em, column sep=0.5em, text height=1ex, text depth=0ex]
   { \bullet & \bullet & \bullet & \bullet & \bullet & \bullet & \   &         & \bullet &         & \bullet &         & \bullet &         &         \\
             & \bullet &         &         & \bullet &         & \   & \bullet & \bullet & \bullet & \bullet & \bullet & \bullet & \bullet & \bullet \\};
   \path[->,font=\scriptsize]
   (m-1-1) edge (m-2-2)
   (m-1-2) edge (m-2-2)
   (m-1-3) edge (m-2-2)
   (m-1-5) edge (m-2-5)
   (m-1-6) edge (m-2-5)
   (m-1-9) edge (m-2-8)
   (m-1-9) edge (m-2-9)
   (m-1-9) edge (m-2-10)
   (m-1-11) edge (m-2-11)
   (m-1-13) edge (m-2-13)
   (m-1-13) edge (m-2-14);
  \end{tikzpicture}
\]

\begin{lemma}\label{lemma: existence of extremal arrows for polarized representations}
 Let $\cB$ be an ordered polarization for $M$. Let $\overline\alpha:p\to q$ be an arrow in $Q$ and  $\cB_p=\cB_{p,\overline\alpha}^<\amalg\cB_{p,\overline\alpha}^>$ and $\cB_q=\cB_{q,\overline\alpha}^<\amalg\cB_{q,\overline\alpha}^>$ a sorting for $M_{\overline\alpha}$. Then every $i$ in $\cB_{q,\overline\alpha}^<\amalg\cB_{p,\overline\alpha}^>$ connects to a unique extremal arrow.
\end{lemma}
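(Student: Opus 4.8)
The plan is to split $\cB_{q,\overline\alpha}^<\amalg\cB_{p,\overline\alpha}^>$ into its two pieces, which are interchanged by passing from $M_{\overline\alpha}$ to its adjoint $M_{\overline\alpha}^\ad$, and in each piece to exhibit one explicit extremal arrow at $i$ and rule out all others. The only input beyond the definitions is the following reformulation, valid because $F$ is ordered: an arrow $\alpha\colon s\to t$ in $F^{-1}(\overline\alpha)$ is extremal precisely when $s$ is smallest among the sources of arrows in $F^{-1}(\overline\alpha)$ with target $t$, and $t$ is largest among the targets of arrows in $F^{-1}(\overline\alpha)$ with source $s$. Indeed, a failure of extremality supplies some $\alpha'\colon s'\to t'$ in $F^{-1}(\overline\alpha)$, distinct from $\alpha$, with $s'\le s$ and $t\le t'$, and orderedness forces either $s'=s$ with $t<t'$, or $s'<s$ with $t'=t$; conversely either of these configurations witnesses non-extremality. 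I would record this once and use it in both cases.

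For $i\in\cB_{p,\overline\alpha}^>$: the polarization property gives $r_{\overline\alpha}(i)\ge 1$, so there is at least one arrow of $F^{-1}(\overline\alpha)$ out of $i$; let $\alpha_0\colon i\to t_0$ be the one whose target is largest. Since $M_{\overline\alpha}^\ad$ restricts to a surjection $\cB_{q,\overline\alpha}^>\cup\{0\}\twoheadrightarrow\cB_{p,\overline\alpha}^>\cup\{0\}$, the basis vector $i$ equals $M_{\overline\alpha}^\ad(c)$ for some $c\in\cB_{q,\overline\alpha}^>$, which says exactly that there is an arrow $i\to c$ in $F^{-1}(\overline\alpha)$ with $c\in\cB_{q,\overline\alpha}^>$; as $\cB_{q,\overline\alpha}^>$ is a final segment of $\cB_q$ we get $t_0\ge c$, hence $t_0\in\cB_{q,\overline\alpha}^>$ and so $r_{\overline\alpha}(t_0)\le 1$ by the polarization property. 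Thus $i$ is the unique source of an arrow into $t_0$ and $t_0$ is the largest target of an arrow out of $i$, so $\alpha_0$ is extremal by the reformulation. Any other arrow of $F^{-1}(\overline\alpha)$ incident to $i$ has the form $i\to t'$ with $t'<t_0$ (there are no arrows of $F^{-1}(\overline\alpha)$ with target $i$, since $i\in\cB_p$ and $p\ne q$), hence has non-maximal target and is not extremal. So $\alpha_0$ is the unique extremal arrow $i$ connects to.

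The case $i\in\cB_{q,\overline\alpha}^<$ is dual: one may simply apply the previous paragraph to $M_{\overline\alpha}^\ad$, i.e.\ reverse all arrows and exchange the roles of $\cB^<$ and $\cB^>$. Concretely, $r_{\overline\alpha}(i)\ge 1$ gives an incoming arrow over $\overline\alpha$; the incoming arrow $\alpha_0\colon s_0\to i$ with $s_0$ smallest has $s_0\in\cB_{p,\overline\alpha}^<$ (because $i$, lying in $\cB_{q,\overline\alpha}^<$, is hit by the surjection $\cB_{p,\overline\alpha}^<\cup\{0\}\twoheadrightarrow\cB_{q,\overline\alpha}^<\cup\{0\}$, so some source in the initial segment $\cB_{p,\overline\alpha}^<$ maps onto $i$), whence $r_{\overline\alpha}(s_0)\le 1$ forces $\alpha_0$ to be the unique arrow out of $s_0$; then $\alpha_0$ is extremal, while every other arrow of $F^{-1}(\overline\alpha)$ at $i$ (necessarily into $i$) has non-minimal source and is not extremal.

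I expect the main obstacle to be the bookkeeping that converts the two surjectivity clauses in the definition of a polarization into the precise incidences used above --- above all, that a vertex $i\in\cB_{p,\overline\alpha}^>$ not merely emits some arrow over $\overline\alpha$ but emits one landing in $\cB_{q,\overline\alpha}^>$ (and dually on the target side), since this is exactly what activates the ramification bound $r_{\overline\alpha}\le 1$ on the far side and pins down the extremal arrow. Once this is unwound, combining it with the order-preservation of $F$ (which rules out the would-be ``crossing'' witnesses to non-extremality) makes both cases routine. A minor point to address, or to exclude by hypothesis, is the case of a loop $\overline\alpha\colon p\to p$, where $i$ may be incident to arrows of both orientations over $\overline\alpha$ and the two orientations must be counted separately.
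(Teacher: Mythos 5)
Your proof is correct and takes essentially the same route as the paper: the polarization makes the $\overline\alpha$-neighbors of a vertex $i\in\cB_{q,\overline\alpha}^<\amalg\cB_{p,\overline\alpha}^>$ unramified, and together with orderedness of $F$ this forces the arrow out of $i$ with largest target (resp.\ into $i$ with smallest source) to be the unique extremal arrow at $i$. Your choices ``largest target'' and ``smallest source'' are in fact the ones that match the definition of an extremal arrow; the printed proof in the paper has these reversed (it writes ``minimal among the targets'' and ``maximal among the sources''), which appears to be a typo, as one can check against Example \ref{ex: ramified d_4}. One small streamlining: the detour through surjectivity of $M_{\overline\alpha}^\ad$ to place $t_0$ in $\cB_{q,\overline\alpha}^>$ is unnecessary, since the block structure of the polarization already forces every arrow out of $i\in\cB_{p,\overline\alpha}^>$ to land in $\cB_{q,\overline\alpha}^>$; and your worry about a loop $\overline\alpha\colon p\to p$ is legitimate but outside the scope the paper has in mind (the appendix explicitly assumes $Q$ has no loops).
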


\begin{proof}
 It is clear that every vertex $i$ connects at most to one extremal arrow in $F^{-1}(\overline\alpha)$. Since $M$ is polarized by $\cB$, we have that if $r_\alpha(i)\geq 1$, then $r_\alpha(j)= 1$ for all $j$ such that there is an arrow $\alpha:i\to j$ or $\alpha:j\to i$ in $F^{-1}(\overline\alpha)$. In case $i=s(\alpha)$, this means that $\alpha:i\to j_0$ is extremal where $j_0$ is minimal among the targets of arrows in $F^{-1}(\overline\alpha)$ with source $i$. In case $i=t(\alpha)$, this means that $\alpha:j_0\to i$ is extremal where $j_0$ is maximal among the sources of arrows in $F^{-1}(\overline\alpha)$ with target $i$. This establishes the lemma.
\end{proof}
  
\begin{rem}
 Ringel develops in \cite{Ringel12} the notion of a radiation basis in order to exhibit distinguished tree bases for exceptional modules. By Proposition 3 of \cite{Ringel12}, a radiation basis $\cB$ is a polarization of $M$ (w.r.t.\ any ordering of $\cB$). Examples of representations with radiation basis are indecomposable representations of Dynkin quivers (with an exception for $E_8$) and the pull-back of preinjective or preprojective modules of the Kronecker quiver $K(n)$ with $n$ arrows to its universal covering graph. Since the coefficient quiver of a pull-back is the same as the coefficient quiver of the original representation, it follows that every preinjective or preprojective representation of the Kronecker quiver $K(n)$ is polarized by some ordered basis.

 In Appendix \ref{appendix}, we find a general strategy to establish polarizations of exceptional modules along Schofield induction. In the joint forth-coming paper \cite{Lorscheid-Weist} with Thorsten Weist, we will show that every exceptional representation $M$ of a quiver of affine Dynkin type $\tilde D_n$ has a polarization which yields a Schubert decomposition of $\Gr_\ue(M)$ into affine spaces.
\end{rem}

\subsection{Maximal relevant pairs}\label{subsection: maximal relevant pairs}

Let $\overline\alpha\in Q_1$. A relevant pair $(i,j)$ is \emph{maximal for $\overline\alpha$} if there exists a relevant triple $(\overline\alpha,t,s)$ such that $\Psi(i,j)=\Psi(\overline\alpha,t,s)$. 

\begin{lemma}\label{lemma: maximal relevant pairs for relevant triples}
 Assume that $M$ is polarized by $\cB$ and that $\beta\subset\cB$ is extremal successor closed. Let $(\overline\alpha,t,s)$ be a relevant triple with $s\in\beta$ and $t\notin\beta$. Then one of the following holds true.
 \begin{enumerate}
  \item There is an extremal arrow $\alpha':s'\to t$ in $F^{-1}(\overline \alpha)$ such that $s'\notin\beta$ and
        \[
         \Psi_\beta(\overline\alpha,t,s) \ = \ \Psi(s',s) \ = \ \Psi(\overline\alpha,t,s).
        \]
        In this case, the $\beta$-reduced form of $E(\overline\alpha,t,s)$ is
        \[
         w_{s',s} \quad = \quad - \sum_{\substack{\alpha\in F^{-1}(\overline\alpha)\text{ with}\\ t(\alpha)=t,\ s(\alpha)\notin\beta}}  w_{s(\alpha),s} \ + \sum_{\substack{\alpha\in F^{-1}(\overline\alpha)\text{ with}\\ s'<s(\alpha)<s, \\ s(\alpha)\notin\beta,\ t(\alpha)\in\beta}} w_{t,t(\alpha)}w_{s(\alpha),s} \ + \sum_{\substack{\alpha\in F^{-1}(\overline\alpha)\\ \text{with }s(\alpha)=s,\text{ and}\\ t(\alpha)\in\beta\text{ or }t(\alpha)=t}} w_{t,t(\alpha)}.
        \]
  \item There is an extremal arrow $\alpha':s\to t'$ in $F^{-1}(\overline \alpha)$ such that $t'\in\beta$ and
        \[
         \Psi_\beta(\overline\alpha,t,s) \ = \ \Psi(t,t') \ = \ \Psi(\overline\alpha,t,s).
        \]
        In this case, the $\beta$-reduced form of $E(\overline\alpha,t,s)$ is
        \[
         w_{t,t'} \quad = \quad \sum_{\substack{\alpha\in F^{-1}(\overline\alpha)\\ \text{with }t(\alpha)=t,\text{ and}\\ s(\alpha)\notin\beta\text{ or }s(\alpha)=s}}  w_{s(\alpha),s} \ - \sum_{\substack{\alpha\in F^{-1}(\overline\alpha)\text{ with}\\ t<t(\alpha)<t',\\ s(\alpha)\notin\beta,\ t(\alpha)\in\beta}} w_{t,t(\alpha)}w_{s(\alpha),s} \ - \sum_{\substack{\alpha\in F^{-1}(\overline\alpha)\text{ with}\\ s(\alpha)=s,\ t(\alpha)\in\beta}} w_{t,t(\alpha)}.
        \]
 \end{enumerate}
\end{lemma}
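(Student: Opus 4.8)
The plan is to exploit the polarization structure: fix a sorting $\cB_{p,\overline\alpha}^<\amalg\cB_{p,\overline\alpha}^>$ and $\cB_{q,\overline\alpha}^<\amalg\cB_{q,\overline\alpha}^>$ for $M_{\overline\alpha}$ where $p=s(\overline\alpha)$, $q=t(\overline\alpha)$. The relevant triple $(\overline\alpha,t,s)$ witnesses some arrow $\alpha':s'\to t'$ in $F^{-1}(\overline\alpha)$ with $F(s')=F(s)$, $F(t')=F(t)$, $s'\le s$ and $t\le t'$. The first step is to locate which of $s$ or $t$ ``drives'' the value of $\Psi(\overline\alpha,t,s)=\max\{\Psi(s_{\min},s),\Psi(t,t_{\max})\}$, and show that the driving vertex lies in exactly one of the two good strata $\cB_{q,\overline\alpha}^<$ or $\cB_{p,\overline\alpha}^>$ so that Lemma \ref{lemma: existence of extremal arrows for polarized representations} supplies a \emph{unique} extremal arrow at it.

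\textbf{Key steps.} First I would unwind the definitions of $s_{\min}$ and $t_{\max}$ and of $\Psi$ on $\Rel^3$: since the fibre-length component dominates the lexicographic order on $\N\times\N\times T_0$, one checks that $\Psi(\overline\alpha,t,s)$ equals $\Psi(s_{\min},s)$ precisely when the $\overline\alpha$-fibre of $M_p$ between $s_{\min}$ and $s$ is longer than the $\overline\alpha$-fibre of $M_q$ between $t$ and $t_{\max}$, and that the two alternatives (i) and (ii) of the lemma correspond exactly to these two cases (with ties broken by $\delta$ and then by the last coordinate). Second, in case (i), I claim $s'\notin\beta$: indeed if $s'\in\beta$ then, because $s\in\beta$ and $\beta$ has type $\ue$, either $s'=s$ (contradicting that the $\overline\alpha$-fibre strictly grows, using $t\notin\beta$ and extremality of $\alpha'$ forced by the polarization) or the extremal arrow out of $s'$ given by Lemma \ref{lemma: existence of extremal arrows for polarized representations} must have target $t$, whence $t\in\beta$ by extremal successor closedness — a contradiction. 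This pins down the unique extremal $\alpha':s'\to t$ with $s'\notin\beta$. Third, feed this information into the $\beta$-reduced form \eqref{eq: reduced key formula}: the term $w_{s',s}$ survives (since $s'\notin\beta$, $s'\le s$, $t(\alpha')=t$), its coefficient is $1$, and one identifies $\Psi(s',s)$ as the largest $\Psi$-value among the relevant pairs indexing \eqref{eq: reduced key formula} by comparing it termwise against every other surviving monomial $w_{s(\alpha),s}$, $w_{t,t(\alpha)}w_{s(\alpha),s}$ and $w_{t,t(\alpha)}$ — here one uses that $s'=s_{\min}$ among sources of arrows hitting $t$, that $\delta$ is controlled by graph distance along $T$, and that the ``case (i)'' inequality makes the $s$-side strictly dominate the $t$-side. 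Rearranging \eqref{eq: reduced key formula} to isolate $w_{s',s}$ then yields the displayed equation. Case (ii) is handled by the symmetric argument using the adjoint surjection $M_{\overline\alpha}^\ad$ on the $\cB^>$-strata and extremal arrows $s\to t'$ with $t'\in\beta$.

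\textbf{Main obstacle.} The delicate part is the termwise comparison establishing that $\Psi(s',s)$ (resp.\ $\Psi(t,t')$) is genuinely the \emph{maximum} over all relevant pairs appearing in \eqref{eq: reduced key formula}, i.e.\ that $\Psi_\beta(\overline\alpha,t,s)=\Psi(\overline\alpha,t,s)$ in this situation. One must rule out that some quadratic term $w_{t,t(\alpha)}w_{s(\alpha),s}$ contributes a relevant-pair index with larger $\epsilon$ or $\delta$; this is where extremality of $\alpha'$, the polarization bound $r_{\overline\alpha}(s)\le 1$ on the $\cB^<$ side, and the precise definition of $s_{\min}$ (minimality forces $s'\le s(\alpha)$ for every competing arrow with target $t$, hence $\epsilon(s',s)\ge\epsilon(s(\alpha),s)$) all have to be combined carefully. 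I expect the bookkeeping of which arrows survive the passage from $E(\overline\alpha,t,s)$ to its $\beta$-reduced form — keeping straight the four index conditions in each sum — to be the most error-prone, though not conceptually deep.
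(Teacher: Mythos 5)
Your overall strategy mirrors the paper's: use the polarization and Lemma \ref{lemma: existence of extremal arrows for polarized representations} to locate an extremal arrow at one of $s$ or $t$, use extremal successor closedness of $\beta$ to see that its surviving endpoint gives a free coefficient, and then show this free coefficient carries the maximal $\Psi$-value in the $\beta$-reduced form of $E(\overline\alpha,t,s)$. However, as you yourself flag, you leave the ``main obstacle'' — the termwise comparison establishing $\Psi_\beta(\overline\alpha,t,s)=\Psi(s',s)=\Psi(\overline\alpha,t,s)$ — unproved, and this is where essentially all the work of the lemma lives. The paper's proof of this point is genuinely delicate and not a mere bookkeeping exercise: it splits on whether $s$ (resp.\ $t$) is the endpoint of an extremal arrow in $F^{-1}(\overline\alpha)$, handles the ``both'' case directly, and in the one-sided case uses Lemma \ref{lemma: existence of extremal arrows for polarized representations} to place $s$ in $\cB_{p,\overline\alpha}^<$, deduces $t\in\cB_{q,\overline\alpha}^<$ and the existence of a unique extremal $\alpha':s'\to t$, and then proves the \emph{strict} inequality $\Psi(s',s)>\Psi(t,t')$ by observing that equality of fibre lengths would force every $s''$ between $s'$ and $s$ to be the source of exactly one arrow in the fibre, producing an extremal arrow at $s$ and contradicting the case hypothesis. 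None of this is present in your outline; ``the delicate part is the termwise comparison'' is an accurate description of the gap, not a proof.

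There is also a local flaw in your justification that $s'\notin\beta$. You argue from ``$s\in\beta$ and $\beta$ has type $\ue$'', which is irrelevant, and set up a false dichotomy ``$s'=s$ or \ldots''. The correct and much shorter reason is: $\alpha':s'\to t$ is extremal with $t\notin\beta$, so extremal successor closedness forces $s'\notin\beta$ (if $s'\in\beta$ then $t\in\beta$). Your second branch eventually reaches this, but the surrounding framing would mislead a reader.

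Finally, your proposed case split — decide first which of $\Psi(s_{\min},s)$ and $\Psi(t,t_{\max})$ dominates, then deduce where the extremal arrow sits — inverts the paper's logic (which first locates extremal arrows and then deduces which side dominates). This inversion is not obviously harmless: $s_{\min}$ is defined by a one-sided condition $t\le t(\alpha)$ and need not a priori coincide with the source of the extremal arrow into $t$, so you would still need the polarization/ordering analysis to relate the two before the comparison of fibre lengths can even start. As written, the proposal assumes the conclusion of that analysis rather than proving it.
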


\begin{proof}
 Once we know that there is an extremal arrow $\alpha':s'\to t$ (or $\alpha':s\to t'$), it is clear that $s'\notin\beta$ (or $t\in\beta$), that $w_{s',s}$ (or $w_{t,t'}$) is a free coefficient and that the $\beta$-reduced form of $E(\overline\alpha,t,s)$ looks as described in \eqref{part1} (or \eqref{part2}).

 If there are extremal arrows $\alpha':s'\to t$ and $\alpha'':s\to t''$, then $s'$ is minimal among the sources of arrows in $F^{-1}(\overline\alpha)$ with target $t$, and $t''$ is maximal among the targets of arrows in $F^{-1}(\overline\alpha)$ with source $s$. Clearly, we have $\Psi(\overline\alpha,t,s)=\max\{\Psi(s',s),\Psi(t,t'')\}$. By the definition of a relevant triple, we have $s'\leq s$ and $t\leq t''$. Since $\beta$ is extremal successor closed, $s'\notin\beta$ and $t''\in\beta$. In particular, this means that $s'\neq s$ and $t\neq t''$, and thus $w_{s',s}$ and $w_{t,t''}$ are free coefficients. By the minimality of $s'$ and the maximality of $t''$, every other free coefficient $w_{i,j}$ in the $\beta$-reduced form of $E(\overline\alpha,t,s)$ must satisfy $\epsilon(i,j)<\max\{\epsilon(s',s),\epsilon(t,t'')\}$. Therefore also $\Psi_\beta(\overline\alpha,t,s)=\max\{\Psi(s',s),\Psi(t,t'')\}$, which establishes the proposition in the case that both $s$ and $t$ connect to extremal arrows in the fibre of 
$\overline\alpha$.

 Let $p=s(\overline\alpha)$ and $q=t(\overline\alpha)$. Let $\cB_p=\cB_{p,\overline\alpha}^<\amalg\cB_{p,\overline\alpha}^>$ and $\cB_q=\cB_{q,\overline\alpha}^<\amalg\cB_{q,\overline\alpha}^>$ be sortings for $M_{\overline\alpha}$. 
 If $s$ is not the source of any extremal arrow in the fibre of $\overline\alpha$, then Lemma \ref{lemma: existence of extremal arrows for polarized representations} implies that $s\in\cB_{p,\overline\alpha}^<$. By the definition of a relevant triple, there is an arrow $\alpha\in F^{-1}(\overline\alpha)$ with $s(\alpha)\leq s$ and $t(\alpha)\leq t$. This implies that $t\in\cB_{q,\overline\alpha}^<$ and, by Lemma \ref{lemma: existence of extremal arrows for polarized representations}, that there is an extremal arrow $\alpha':s'\to t$. Since $\beta$ is extremely successor closed, $s'\notin\beta$ and $w_{s,s'}$ is a free coefficient.

 We claim that in this situation $\Psi_\beta(\overline\alpha,t,s)=\Psi(s',s)=\Psi(\overline\alpha,t,s)$. Since $\alpha'$ is extremal, all $s''\in F^{-1}(p)$ appearing in an index of the $\beta$-reduced form of $E(\overline\alpha,t,s)$ must lie between $s'$ and $s$. This means that $\epsilon(s',s)$ is larger than $\epsilon(s',s'')$ and $\epsilon(s'',s)$ if $s''$ is different from both $s$ and $s'$. Similarly, the largest relevant pair $(t'',t')$ with $F(t'')=F(t')=q$ satisfies $t''=t$ and that $t'$ is maximal among the targets of arrows in $F^{-1}(\overline\alpha)$ whose source is less or equal to $s$. Since $t,t'\in\cB_{q,\overline\alpha}^<$, we have $\epsilon(t,t')\leq\epsilon(s',s)$. Equality can only hold if every $s''$ between $s'$ and $s$ is the source of precisely one arrow in $F^{-1}(\overline\alpha)$. But then there would be such a unique arrow with source $s$, which is necessarily extremal. Since this contradicts the assumption that there is no extremal arrow with source $s$ in $F^{-1}(\overline\
alpha)$, we see that $\Psi(s',s)>\Psi(t,t')$. This shows that $\Psi_\beta(\overline\alpha,t,s)=\Psi(s',s)=\Psi(\overline\alpha,t,s)=\Psi(\overline\alpha,t,s)$, which means that \eqref{part1} is satisfied.

 If $t$ is not the target of any extremal arrow in the fibre of $\overline\alpha$, then we conclude analogously to the previous case that there is an extremal arrow $\alpha':s\to t'$ with $t'\in\beta$ such that $\Psi_\beta(\overline\alpha,t,s)=\Psi(t,t')$. Thus in this case, \eqref{part2} is satisfied.
\end{proof}

\begin{lemma}\label{lemma: a pair is maximal for at most one triple with fixed arrow}
 Let $\overline\alpha\in Q_1$ and $(i,j)\in\Rel^2$. Assume that $M$ is polarized by $\cB$.
 \begin{enumerate}
  \item If $F(i)=s(\overline\alpha)$, then there is at most one $\alpha:i\to t$ in $F^{-1}(\overline\alpha)$ such that $\Psi(i,j)=\Psi(\overline\alpha,t,j)$.
  \item If $F(j)=t(\overline\alpha)$, then there is at most one $\alpha:s\to j$ in $F^{-1}(\overline\alpha)$ such that $\Psi(i,j)=\Psi(\overline\alpha,i,s)$.
 \end{enumerate}
\end{lemma}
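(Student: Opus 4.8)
The plan is to prove both statements by analyzing the definition of $\Psi(\overline\alpha,t,s)$ in terms of the extremal arrows in the fibre $F^{-1}(\overline\alpha)$, using the polarization hypothesis to control which vertices are sources/targets of extremal arrows. I treat (i); statement (ii) follows by the symmetric argument (swapping the roles of sources and targets, and of $\cB^<$ and $\cB^>$). So fix $(i,j)\in\Rel^2$ with $F(i)=s(\overline\alpha)=:p$, and suppose $\alpha_1:i\to t_1$ and $\alpha_2:i\to t_2$ are two arrows in $F^{-1}(\overline\alpha)$ with $\Psi(i,j)=\Psi(\overline\alpha,t_1,j)=\Psi(\overline\alpha,t_2,j)$; I want $\alpha_1=\alpha_2$, i.e.\ $t_1=t_2$. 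The idea is that $\Psi(\overline\alpha,t,j)=\max\{\Psi(s_{\min},j),\Psi(t,t_{\max})\}$ where $s_{\min}$ is the smallest source of an arrow in $F^{-1}(\overline\alpha)$ with target $\geq t$ and $t_{\max}$ is the largest target of an arrow with source $\leq j$. Observe that $t_{\max}$ depends only on $j$, not on $t$, so the second term $\Psi(t,t_{\max})$ is the only place where $t$ enters sensitively.

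Next I would distinguish according to whether the common value $\Psi(i,j)$ is realized by the ``source side'' or the ``target side''. If for the relevant triple $(\overline\alpha,t_1,j)$ we have $\Psi(i,j)=\Psi(t_1,t_{\max})$, then since $t_1$ is the target of an arrow with source $i\leq j$, we have $t_1\leq t_{\max}$, and by polarization (Lemma \ref{lemma: existence of extremal arrows for polarized representations}) the vertex $i$, being a source, either lies in $\cB^>_{p,\overline\alpha}$ — in which case $i$ connects to a unique extremal arrow, forcing $t_1$ to be that unique extremal target — or lies in $\cB^<_{p,\overline\alpha}$, where $r_{\overline\alpha}(i)\le 1$ so again there is only one arrow out of $i$. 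Either way $t_1$ is determined by $i$ and $\overline\alpha$, hence $t_1=t_2$. The remaining case is that for \emph{both} triples the value $\Psi(i,j)$ is realized on the source side, i.e.\ $\Psi(i,j)=\Psi(s_{\min}(t_1),j)$ and $\Psi(i,j)=\Psi(s_{\min}(t_2),j)$. Since $\Psi$ restricted to pairs $(\,\cdot\,,j)$ with fixed second coordinate $j$ is injective (it is $(\epsilon(\,\cdot\,,j),\delta(\,\cdot\,,j),j)$ and $\epsilon(k,j)=\#\{k'\mid F(k')=F(k),\ k\leq k'<j\}$ is strictly monotone in $k$ among vertices of the fibre below $j$), we get $s_{\min}(t_1)=s_{\min}(t_2)=:s_0$; and $\epsilon(s_0,j)=\epsilon(i,j)$ forces $s_0=i$. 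But $s_{\min}(t)$ is the smallest source of an arrow in the fibre with target $\geq t$; if $s_{\min}(t_1)=s_{\min}(t_2)=i$ and $i$ itself is a source of arrows to $t_1$ and $t_2$, I use polarization once more: $i$ being a source with an arrow to $t_1$ and to $t_2$ means $r_{\overline\alpha}(i)>1$ unless $t_1=t_2$, which is only allowed if $i\in\cB^>_{p,\overline\alpha}$, but then again $i$ connects to a unique extremal arrow and the minimality of $s_{\min}$ pins $t_1=t_2$. In every case the two arrows coincide.

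The step I expect to be the main obstacle is the bookkeeping around $s_{\min}$ and $t_{\max}$: one must carefully check that these ``extremal auxiliary vertices'' entering the definition of $\Psi(\overline\alpha,t,s)$ are the ones controlled by Lemma \ref{lemma: existence of extremal arrows for polarized representations}, and that the equality $\epsilon(i,j)=\epsilon(k,j)$ really forces $i=k$ when $i,k$ lie in the same fibre below $j$ — this uses that $\epsilon$ counts fibre-vertices in the half-open interval $[i,j)$, which is strictly decreasing as $i$ increases through the fibre. Once the monotonicity of $\epsilon(\,\cdot\,,j)$ and the uniqueness-of-extremal-arrow statement are in hand, the case analysis above is routine; the subtlety is purely in matching the two formalisms (the order-theoretic $\Psi$ and the combinatorial polarization picture). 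I would also note for later use that this lemma is exactly what guarantees, in the proof of the main theorem, that distinct relevant triples contribute distinct ``pivot'' free coefficients, so that the defining equations can be solved successively in linear terms.
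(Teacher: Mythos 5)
The structure you set up is plausible, but the decisive step is missing. In your key case (``both triples realized on the source side''), you correctly deduce from the injectivity of $\Psi$ that $s_{\min}(t_1)=s_{\min}(t_2)=i$, but then assert that ``the minimality of $s_{\min}$ pins $t_1=t_2$'' with no argument — and this inference is simply false. If $i$ is the source of two arrows in $F^{-1}(\overline\alpha)$ with targets $t_1<t_2$ and no strictly smaller vertex in $\cB_{s(\overline\alpha)}$ is the source of an arrow in $F^{-1}(\overline\alpha)$ with target $\geq t_1$, then $s_{\min}(t_1)=s_{\min}(t_2)=i$ holds while $t_1\neq t_2$. What your argument never exploits is that $\Psi(\overline\alpha,t_l,j)$ is a \emph{maximum}; so $\Psi(\overline\alpha,t_l,j)=\Psi(i,j)$ not only forces $s_{\min}(t_l)=i$ but also forces $\Psi(t_l,t_{\max})\leq\Psi(i,j)$, and it is this latter bound that has to be violated for the smaller target. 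Without an estimate comparing $\epsilon(t_l,t_{\max})$ (or a substitute) against $\epsilon(i,j)$, the case does not close.

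That estimate is exactly what the paper's proof supplies. Given two arrows $i\to t$ and $i\to t'$ with $t'<t$, polarization puts $i$ — and hence every $k\geq i$ in $\cB_{s(\overline\alpha)}$, including $j$ — in $\cB_{s(\overline\alpha),\overline\alpha}^>$, and puts $t,t'$ and every $l\geq t'$ in $\cB_{t(\overline\alpha),\overline\alpha}^>$ (so such $l$ have $r_{\overline\alpha}(l)\leq 1$). In particular $j$ is the source of some $\alpha'':j\to t''$, and one compares fibre lengths directly: $\epsilon(t',t'')>\epsilon(t,t'')\geq\epsilon(i,j)$, with equality on the right possible only when $t$ is maximal among the targets of arrows with source $i$. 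Since $t''\leq t_{\max}$, this yields $\Psi(\overline\alpha,t',j)\geq\Psi(t',t_{\max})>\Psi(i,j)$ for the non-maximal target $t'$, so at most one target (the maximal one) can realize $\Psi(\overline\alpha,\cdot,j)=\Psi(i,j)$. This $\epsilon$-comparison, which matches fibre vertices in $[i,j)$ on the source side against fibre vertices in $[t,t'')$ on the target side via the $r_{\overline\alpha}\leq 1$ bound, is the missing ingredient. (Your Case~1, where $\Psi(i,j)=\Psi(t_1,t_{\max})$, is in fact vacuous whenever $\overline\alpha$ is not a loop: the third coordinate of $\Psi$ is the second entry of the pair, so equality would force $j=t_{\max}$, impossible when $F(j)\neq F(t_{\max})$. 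The unexplained claim there that $t_1$ must be ``the unique extremal target'' is therefore harmless, but it is not a correct deduction.)
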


\begin{proof}
 We prove \eqref{part1}. If there is only one arrow $\alpha$ in $F^{-1}(\overline\alpha)$ with source $i$, then \eqref{part1} is clear. Assume that there are two different arrows $\alpha:i\to t$ and $\alpha':i\to t'$ in  $F^{-1}(\overline\alpha)$ with $t'<t$. Since $M$ is polarized, we have $r_{\overline\alpha}(k)\leq 1$ for all $k\geq i$ and $r_{\overline\alpha}(l)\geq 1$ for all $l\geq t'$. This means that there is an arrow $\alpha'':j\to t''$ and that $\epsilon(t',t'')>\epsilon(t,t'')\geq\epsilon(i,j)$. An equality $\epsilon(t,t'')=\epsilon(i,j)$ is only possible if $t$ is maximal among the targets of arrows in $F^{-1}(\overline\alpha)$ with source $i$. 

 This shows \eqref{part1}. The proof of \eqref{part2} is analogous.
\end{proof}

\begin{cor}\label{cor: a unique relevant triple per maximal relevant pair}
  Assume that $M$ is polarized by $\cB$ and that $\beta\subset\cB$ is extremal successor closed. Let $\overline\alpha\in Q_1$. If $(i,j)$ is maximal for $\overline\alpha$, then there is a unique $(\overline\alpha,t,s)\in\Rel^3$ such that $\Psi_\beta(\overline\alpha,t,s)=\Psi(i,j)=\Psi(\overline\alpha,t,s)$. If $(i,j)$ is not maximal for $\overline\alpha$, then there is no relevant triple $(\overline\alpha,t,s)$ with $\Psi(i,j)=\Psi(\overline\alpha,t,s)$.
\end{cor}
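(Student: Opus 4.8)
The second statement is just the contrapositive of the definition of \emph{maximal for $\overline\alpha$}: a relevant pair $(i,j)$ is maximal for $\overline\alpha$ precisely when some relevant triple $(\overline\alpha,t,s)$ satisfies $\Psi(\overline\alpha,t,s)=\Psi(i,j)$, so if no such triple exists then $(i,j)$ is not maximal. Everything therefore goes into the first statement, which I would prove by establishing uniqueness from Lemmas~\ref{lemma: maximal relevant pairs for relevant triples} and~\ref{lemma: a pair is maximal for at most one triple with fixed arrow}, and existence from Lemma~\ref{lemma: maximal relevant pairs for relevant triples} together with the assumption that $\beta$ is extremal successor closed.

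For uniqueness, let $(\overline\alpha,t,s)$ be any relevant triple realizing $\Psi_\beta(\overline\alpha,t,s)=\Psi(i,j)=\Psi(\overline\alpha,t,s)$; in particular $s\in\beta$ and $t\notin\beta$, so Lemma~\ref{lemma: maximal relevant pairs for relevant triples} applies and tells us that this common value is either $\Psi(s',s)$ for the unique extremal arrow $\alpha':s'\to t$ in $F^{-1}(\overline\alpha)$ with $s'\notin\beta$ (case~(i)), or $\Psi(t,t')$ for the unique extremal arrow $\alpha':s\to t'$ with $t'\in\beta$ (case~(ii)). Since $\Psi(a,b)=\bigl(\epsilon(a,b),\delta(a,b),b\bigr)$, reading off the third coordinate gives $j=s$ in case~(i) and $j=t'$ in case~(ii); because $F(i)=F(j)$, the image $F(j)\in\{s(\overline\alpha),t(\overline\alpha)\}$ already decides which case occurs (loops aside, the two cases are disjoint). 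The monotonicity of the fibre length $\epsilon$ and the root distance $\delta$ along a fibre — exactly the estimate used inside the proof of Lemma~\ref{lemma: maximal relevant pairs for relevant triples} — then forces $i=s'$ in case~(i) (resp.\ $i=t$ in case~(ii)), so $(i,j)$ determines the pair $(s',s)$ (resp.\ $(t,t')$) completely; in particular it determines $s=j$ and the extremal arrow $\alpha'$ emanating from (resp.\ ending at) $i$, hence $t$ (resp.\ $s$) as the other endpoint of $\alpha'$. Lemma~\ref{lemma: a pair is maximal for at most one triple with fixed arrow}(i) (resp.\ (ii)) then finishes it: at most one $t$ (resp.\ $s$) can satisfy $\Psi(i,j)=\Psi(\overline\alpha,t,j)$ (resp.\ $=\Psi(\overline\alpha,i,s)$), so the triple is unique.

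For existence, I would start from a relevant triple $(\overline\alpha,t_0,s_0)$ witnessing maximality and expand $\Psi(\overline\alpha,t_0,s_0)=\max\{\Psi(s_{\min},s_0),\Psi(t_0,t_{\max})\}$. Comparing with $\Psi(i,j)$ as above identifies $(i,j)$ with $(s_{\min},s_0)$ or with $(t_0,t_{\max})$; in the first case $i=s_{\min}$ is the source of an arrow in $F^{-1}(\overline\alpha)$, in the second $j=t_{\max}$ is the target of one. Since $M$ is polarized, Lemma~\ref{lemma: existence of extremal arrows for polarized representations} (after checking that $i$, resp.\ $j$, lies in the relevant part of the fibre) attaches to $i$ (resp.\ $j$) a unique extremal arrow $\alpha':i\to t$ (resp.\ $\alpha':s\to j$), and one checks directly that $(\overline\alpha,t,j)$ (resp.\ $(\overline\alpha,i,s)$) is a relevant triple whose $\beta$-reduced form, by Lemma~\ref{lemma: maximal relevant pairs for relevant triples}, has $\Psi_\beta$ and $\Psi$ both equal to $\Psi(i,j)$; the required memberships $s\in\beta$, $t\notin\beta$ come from $\beta$ being extremal successor closed applied to $\alpha'$ together with the fact that $(i,j)$ is a free coefficient for $\beta$, so $j\in\beta$ and $i\notin\beta$.

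The genuine obstacle is the bookkeeping in this existence step. Since ``maximal for $\overline\alpha$'' is defined with no reference to $\beta$, one must argue that the witnessing triple can be replaced by one compatible with the $\beta$-normal form — concretely, that $s_{\min}$ (resp.\ $t_{\max}$) really is an endpoint of an \emph{extremal} arrow, and that the minimality/maximality defining $s_{\min}$/$t_{\max}$ matches the minimality/maximality of the other endpoint of that arrow, so that the new triple's $\beta$-reduced form indeed has $(i,j)$ as its largest index. This is precisely the kind of fibre estimate already carried out in the proof of Lemma~\ref{lemma: maximal relevant pairs for relevant triples}, so I would reuse it verbatim; the only remaining points are the harmless case of a loop $\overline\alpha$ and, if one wants the statement for relevant pairs that are not free coefficients of $\beta$, a short argument ruling such pairs out.
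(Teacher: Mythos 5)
Your proof takes the same route as the paper, which disposes of the corollary with the one-line remark that it is ``an immediate consequence'' of Lemmas~\ref{lemma: maximal relevant pairs for relevant triples} and~\ref{lemma: a pair is maximal for at most one triple with fixed arrow}; you are essentially unpacking what that ``immediate'' hides. Your treatment of the second sentence (contrapositive of the definition of \emph{maximal for $\overline\alpha$}) and your uniqueness argument (disjointness of the two cases via $F(j)\in\{s(\overline\alpha),t(\overline\alpha)\}$, then Lemma~\ref{lemma: a pair is maximal for at most one triple with fixed arrow} in the appropriate case) are sound and faithful to the paper's intent.

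The one thing you should not expect to succeed is the ``short argument ruling such pairs out'' that you gesture at in your last sentence. Maximal relevant pairs that are \emph{not} free coefficients of $\beta$ cannot be ruled out: take $\beta=\emptyset$ (which is trivially extremal successor closed) and, say, $T=(1\to 2)$ over $Q=(\bullet\to\bullet)$; then $(2,2)$ is maximal for $\overline\alpha$ because $\Psi(\overline\alpha,2,1)=\Psi(2,2)$, yet no relevant triple has $s\in\beta$, so $\Psi_\beta$ is never defined and the existence part of the corollary, read literally, fails. Your existence argument correctly uses the extremal arrow $\alpha'$ from Lemma~\ref{lemma: existence of extremal arrows for polarized representations} plus extremal-successor-closedness to deduce $s\in\beta$, $t\notin\beta$, but this chain needs $j\in\beta$ and $i\notin\beta$ as input, i.e.\ that $w_{i,j}$ is a free coefficient. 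The honest resolution is not to rule the other pairs out but to observe that the corollary's statement is slightly imprecise: what the two lemmas actually yield is the existence and uniqueness of a relevant triple $(\overline\alpha,t,s)$ with $\Psi(\overline\alpha,t,s)=\Psi(i,j)$, together with the identity $\Psi_\beta(\overline\alpha,t,s)=\Psi(i,j)$ \emph{whenever} $s\in\beta$ and $t\notin\beta$ (equivalently, whenever $w_{i,j}$ is free). That weaker reading is exactly what the proof of Theorem~\ref{thm: main theorem} uses, where the case $i\in\beta$ or $j\notin\beta$ is handled separately by noting that $E(\overline\alpha,t,s)$ becomes trivial.
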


\begin{proof}
 This is an immediate consequence of Lemmas \ref{lemma: maximal relevant pairs for relevant triples} and \ref{lemma: a pair is maximal for at most one triple with fixed arrow}.
\end{proof}


\section{Schubert decompositions for tree modules}
\label{section: schubert decompositions for tree modules}


\begin{thm} \label{thm: main theorem}
 Let $M$ be a representation of $Q$ and $\cB$ an ordered polarization for $M$. Let $\ue$ be a dimension vector for $Q$. Assume that every $(i,j)\in\Rel^2$ is maximal for at most one $\overline\alpha\in Q_1$. Then
 \[
  \Gr_\ue(M) \quad = \quad \coprod_{\substack{\beta\subset\cB\\ \text{of type }\ue}} C_\beta^M
 \]
 is a decomposition into affine spaces. Moreover, $C_\beta^M$ is not empty if and only if $\beta$ is extremal successor closed.
\end{thm}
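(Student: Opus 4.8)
The plan is to exhibit each non-empty Schubert cell explicitly as the graph of a polynomial map, by solving the defining equations of Lemma \ref{lemma: key formula} in a well-chosen order. Via Lemma \ref{lemma: normal form} I identify $C_\beta^M$ with the closed subvariety of the affine space $\A^{\cR_\beta}$ whose coordinate functions are the free coefficients $w_{i,j}$, indexed by the set $\cR_\beta$ of free relevant pairs w.r.t.\ $\beta$ (the $(i,j)$ with $i\in\cB_p-\beta_p$, $j\in\beta_p$ and $i<j$ for some $p$), cut out by the $\beta$-reduced forms \eqref{eq: reduced key formula} of the equations $E(\overline\alpha,t,s)$. By the last sentence of Lemma \ref{lemma: key formula}, among these only the relevant triples with $s\in\beta$ and $t\notin\beta$ contribute a non-trivial equation. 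One direction of the ``moreover'' is then immediate: if $\beta$ is not extremal successor closed, then $C_\beta^M=\emptyset$ by Lemma \ref{lemma: not empty implies extremal successor closed}; so assume henceforth that $\beta$ is extremal successor closed.

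Next I would invoke Lemma \ref{lemma: maximal relevant pairs for relevant triples} to rewrite, for each non-trivial relevant triple $(\overline\alpha,t,s)$, its equation \eqref{eq: reduced key formula} in the form
\[
 w_{\ell(\overline\alpha,t,s)} \quad = \quad P_{\overline\alpha,t,s}\bigl((w_{i,j})_{(i,j)\in\cR_\beta}\bigr),
\]
where $\ell(\overline\alpha,t,s)\in\cR_\beta$ is the relevant pair ($(s',s)$ or $(t,t')$ in the notation of that lemma) satisfying $\Psi(\ell(\overline\alpha,t,s))=\Psi_\beta(\overline\alpha,t,s)=\Psi(\overline\alpha,t,s)$, and $P_{\overline\alpha,t,s}$ is a polynomial involving only free coefficients $w_{i,j}$ with $\Psi(i,j)<\Psi(\ell(\overline\alpha,t,s))$ --- this last property is built into the definition of $\Psi_\beta$ as the largest relevant pair index occurring in \eqref{eq: reduced key formula}. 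In particular $\ell(\overline\alpha,t,s)$ is maximal for $\overline\alpha$. I would then set $L=\{\,\ell(\overline\alpha,t,s)\mid(\overline\alpha,t,s)\text{ a non-trivial relevant triple}\,\}\subseteq\cR_\beta$ and check that $(\overline\alpha,t,s)\mapsto\ell(\overline\alpha,t,s)$ is a bijection onto $L$: if $\ell(\overline\alpha_1,t_1,s_1)=\ell(\overline\alpha_2,t_2,s_2)$, then this common pair is maximal for both $\overline\alpha_1$ and $\overline\alpha_2$, hence $\overline\alpha_1=\overline\alpha_2$ by the hypothesis on $\Rel^2$, and then $(t_1,s_1)=(t_2,s_2)$ by Corollary \ref{cor: a unique relevant triple per maximal relevant pair}. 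Thus the defining system consists of exactly one equation per element of $L$, each solved for its own distinguished free coordinate.

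The remaining step is to solve this system triangularly. Since $\Psi$ is an inclusion of $\Rel^2$ into $\N\times\N\times T_0$ and hence induces a linear order, I can list $L=\{\ell_1<\dotsb<\ell_r\}$ in increasing $\Psi$-order, with corresponding equation $w_{\ell_k}=P_k$. As $P_k$ involves only free coordinates of $\Psi$-value strictly less than $\Psi(\ell_k)$, any such coordinate that happens to lie in $L$ is an $\ell_{k'}$ with $k'<k$; so by induction on $k$, substituting the already rewritten relations $w_{\ell_1}=\widetilde P_1,\dotsc,w_{\ell_{k-1}}=\widetilde P_{k-1}$ turns $P_k$ into a polynomial $\widetilde P_k$ in the coordinates $(w_{i,j})_{(i,j)\in\cR_\beta\smallsetminus L}$ alone. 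Consequently the forgetful projection $\A^{\cR_\beta}\to\A^{\cR_\beta\smallsetminus L}$ restricts to an isomorphism $C_\beta^M\xrightarrow{\ \sim\ }\A^{\cR_\beta\smallsetminus L}$, with polynomial inverse $(w_{i,j})_{(i,j)\in\cR_\beta\smallsetminus L}\mapsto\bigl((w_{i,j})_{(i,j)\in\cR_\beta\smallsetminus L},(\widetilde P_k)_{1\le k\le r}\bigr)$. In particular $C_\beta^M$ is an affine space, hence non-empty; combined with the first paragraph this characterizes the non-empty cells, and running over all $\beta$ of type $\ue$ yields the asserted decomposition into affine spaces.

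I do not expect the argument above to present real difficulties, since its substance has been front-loaded into Lemma \ref{lemma: maximal relevant pairs for relevant triples} --- where the polarization hypothesis guarantees that the $\Psi$-maximal term of each non-trivial equation is a \emph{free} coefficient occurring \emph{linearly} --- and into the order-theoretic bookkeeping around $\Psi$. The one point that genuinely needs care inside the proof of the theorem is the bijectivity of $(\overline\alpha,t,s)\mapsto\ell(\overline\alpha,t,s)$, i.e.\ that distinct non-trivial equations have distinct leading coordinates; this is exactly where the standing assumption ``every $(i,j)\in\Rel^2$ is maximal for at most one $\overline\alpha$'' is indispensable, for without it the system need not be triangular and the cell could fail to be an affine space.
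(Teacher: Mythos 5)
Your proof is correct and follows essentially the same approach as the paper: both solve the system $\{E(\overline\alpha,t,s)\}$ by a triangular elimination in increasing $\Psi$-order, using Lemma~\ref{lemma: maximal relevant pairs for relevant triples} to supply a linear leading free coefficient for each non-trivial equation and Corollary~\ref{cor: a unique relevant triple per maximal relevant pair} together with the maximality hypothesis to ensure distinct equations have distinct leading coefficients. The paper packages this as an induction on $\psi\in\Psi(\Rel^2)$ building up $C_\beta^M(\psi)$ one coordinate at a time, whereas you exhibit $C_\beta^M$ in one pass as the graph of a polynomial map over $\A^{\cR_\beta\smallsetminus L}$ --- the same argument with slightly different bookkeeping.
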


\begin{proof}
 By Lemma \ref{lemma: not empty implies extremal successor closed}, $C_\beta^M$ is empty if $\beta$ is not extremal successor closed. Let $\beta$ be extremal successor closed. The theorem is proven once we have shown that $C_\beta^M$ is an affine space,

 As before, we identify $T_0$ order-preservative with $\{1,\dotsc,n\}$. For $\psi\in\N\times\N\times T_0$, we denote by $C_\beta^M(\psi)$ the solution space of all coefficients $w_{i,j}$ with $\Psi(i,j)\leq\psi$ in all equations $E(\overline\alpha,t,s)$ where $(\overline\alpha,t,s)$ is a relevant triple with $\Psi_\beta(\overline\alpha,t,s)<\psi$. We show by induction over $\psi\in\Psi(\Rel^2)$ that $C_\beta^M(\psi)$ is an affine space. Since $\Psi(\Rel^2)$ is finite, this implies that $C_\beta^M$ is an affine space as required.

 As base case, consider $\psi=\Psi(n,n)$. By Lemma \ref{lemma: key formula}, only those relevant triples $(\overline\alpha,t,s)$ with $t\notin\beta$ and $s\in\beta$ lead to non-trivial equations $E(\overline\alpha,t,s)$. For such a relevant triple, $\Psi_\beta(\overline\alpha,t,s)\leq\psi$ if and only if $E(\overline\alpha,t,s)$ does not contain any free coefficient and thus is of the form $w_{s,s}=w_{t,t}w_{s,s}$. This is the case if and only if there is an extremal arrow $\alpha:s\to t$ in $F^{-1}(\overline\alpha)$. Since $\beta$ is extremal successor closed, $w_{s,s}=w_{t,t}w_{s,s}$ is satisfied. This means that $C_\beta^M(\psi)=\A^0$ is a point.

 Consider $\psi>\Psi(n,n)$ and let $\psi'$ be its predecessor in $\Psi(\Rel^2)$. We assume that $C_\beta^M(\psi')$ is an affine space. By the assumption of the theorem, $(i,j)$ is maximal for at most one $\overline\alpha\in Q_1$. If there is none such $\overline\alpha$, then there is no relevant triple $(\overline\alpha,t,s)$ with $\Psi_\beta(\overline\alpha,t,s)=\Psi(i,j)$, which means that $w_{i,j}$ does not appear as a maximal coefficient of an equation $E(\overline\alpha,t,s)$. If $i\in\beta$ or $j\notin\beta$, then $w_{i,j}=0$ and $C_\beta^M(\psi)=C_\beta^M(\psi')$. Otherwise $w_{i,j}$ is free and $C_\beta^M(\psi)=C_\beta^M(\psi')\times\A^1$.

 If there is an arrow $\overline\alpha\in Q_1$ such that $(i,j)$ is maximal for $\overline\alpha$, then there exists a unique relevant triple $(\overline\alpha,t,s)$ such that $\Psi_\beta(\overline\alpha,t,s)=\Psi(i,j)$ by Corollary \ref{cor: a unique relevant triple per maximal relevant pair}. If $w_{i,j}$ is not free, then $i\in\beta$ or $j\notin\beta$. By Lemma \ref{lemma: maximal relevant pairs for relevant triples}, either $t=i$ and there is an extremal arrow $\alpha:s\to j$ in $F^{-1}(\overline\alpha)$ or $s=j$ and there is an extremal arrow $\alpha:i\to t$ in $F^{-1}(\overline\alpha)$. In either case, if $i\in\beta$ or $j\notin\beta$, then $t\in\beta$ or $s\notin\beta$ since $\beta$ is extremal successor closed. This means that $E(\overline\alpha,t,s)$ is trivial and thus $C_\beta^M(\psi)=C_\beta^M(\psi')$. If $w_{i,j}$ is free, but $E(\overline\alpha,t,s)$ is trivial, then $C_\beta^M(\psi)=C_\beta^M(\psi')\times\A^1$. If finally $w_{i,j}$ is free and $E(\overline\alpha,t,s)$ is non-trivial, then $w_{
i,j}$ is determined by all coefficients $w_{i',j'}$ with $\Psi(i',j')<\Psi(i,j)$ by one of the formulas in Lemma \ref{lemma: maximal relevant pairs for relevant triples}. This means that $C_\beta^M(\psi)=C_\beta^M(\psi')$.

 Thus we have shown that in all possible cases, $C_\beta^M(\psi)$ equals either $C_\beta^M(\psi')$ or $C_\beta^M(\psi')\times\A^1$, which are both affine spaces by the inductive hypothesis. This finishes the proof of the theorem.
\end{proof}

\begin{rem}
 Though the assumptions of Theorem \ref{thm: main theorem} come in a different shape than the Hypothesis (H) in Section 4.5 of \cite{L12}, they are indeed equivalent to Hypothesis (H) if $F:Q\to T$ is unramified. 
\end{rem}

\begin{rem}
 Though we do not explicitly require that $\cB$ is a tree basis, it follows from the other assumptions of the theorem that $M$ is a tree module. Indeed, if the coefficient quiver $T$ had a loop and $i$ was the largest vertex of this loop in maximal distance to $1$, then the relevant pair $(i,i)$ would be maximal for the two connecting arrows of the loop. Note that if $M$ is not indecomposable, then $T=\Gamma(M,\cB)$ is not necessarily connected (cf.\ Example \ref{ex: del pezzo}). 

 By \cite{Ringel98}, every exceptional module is a tree module. But it is clear that not every exceptional module admits an ordered tree basis such that the canonical morphism $F:T\to Q$ from the coefficient quiver is ordered. For instance, there are exceptional representations of the Kronecker quiver $K(3)$ with three arrows that attest to this fact, cf.\ the example $P(x,3)$ in \cite[p.\ 15]{Ringel12}.

 However, if $M$ has an radiation basis $\cB$, then we can order $\cB$ inductively along the construction of $M$ by smaller radiation modules such that $\cB$ satisfies the assumptions of the theorem. In particular, this includes all exceptional representations of Dynkin type, with an exception for $E_8$. We see that the class of modules that admit an ordered basis to that we can apply the theorem lies somewhere between radiation modules and tree modules.
\end{rem}

\begin{cor}\label{cor: formula for the euler characteristic}
 Under the assumptions of Theorem \ref{thm: main theorem}, the Euler characteristic of $\Gr_\ue(M)$ equals the number of extremal successor closed subsets $\beta\subset\cB$ of type $\ue$.
\end{cor}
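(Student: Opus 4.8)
The plan is to derive Corollary \ref{cor: formula for the euler characteristic} directly from Theorem \ref{thm: main theorem} by using the basic principle that the Euler characteristic is additive over a decomposition into locally closed pieces and that an affine space $\A^d$ contributes $1$ to the Euler characteristic. Concretely, under the assumptions of Theorem \ref{thm: main theorem} we have the disjoint decomposition $\Gr_\ue(M) = \coprod_{\beta} C_\beta^M$ into locally closed subvarieties, indexed by the subsets $\beta\subset\cB$ of type $\ue$; hence $\chi\bigl(\Gr_\ue(M)\bigr) = \sum_{\beta} \chi\bigl(C_\beta^M\bigr)$, where the sum runs over all $\beta\subset\cB$ of type $\ue$.

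The next step is to evaluate each summand. By Theorem \ref{thm: main theorem}, each $C_\beta^M$ is either empty or an affine space, and it is non-empty precisely when $\beta$ is extremal successor closed. An empty set has Euler characteristic $0$, and $\chi(\A^d)=1$ for every $d\geq 0$ (with the convention $\A^0$ a point). Therefore $\chi\bigl(C_\beta^M\bigr) = 1$ if $\beta$ is extremal successor closed of type $\ue$, and $\chi\bigl(C_\beta^M\bigr) = 0$ otherwise. Substituting into the sum yields
\[
 \chi\bigl(\Gr_\ue(M)\bigr) \quad = \quad \#\ \bigl\{\ \beta\subset\cB\text{ of type }\ue\ \bigm|\ \beta\text{ is extremal successor closed}\ \bigr\},
\]
which is exactly the claimed formula.

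The only point requiring a word of justification is the additivity $\chi(\coprod_i X_i) = \sum_i \chi(X_i)$ for a finite decomposition of a complex variety into locally closed subvarieties; this is a standard property of the topological Euler characteristic with compact supports (equivalently, since all our pieces are affine spaces, of the ordinary singular Euler characteristic via the long exact sequence of a closed-open pair applied inductively along the natural stratification). I would simply cite this as well known, as is customary in the quiver-Grassmannian literature. There is no real obstacle here: the entire content of the corollary is packaged in Theorem \ref{thm: main theorem}, and the argument is the one-line observation that a decomposition into affine spaces and empty cells has Euler characteristic equal to the number of non-empty cells.
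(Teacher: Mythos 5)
Your proof is correct and follows essentially the same route as the paper's: decompose $\Gr_\ue(M)$ into the Schubert cells $C_\beta^M$, invoke additivity of the Euler characteristic over locally closed strata, and use $\chi(\A^d)=1$ and $\chi(\emptyset)=0$ together with the non-emptiness criterion from Theorem \ref{thm: main theorem}. Nothing is missing; the remark about additivity via compact supports (or the closed-open long exact sequence) is the same standard justification the paper implicitly relies on.
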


\begin{proof}
 Since the Euler characteristic is additive under decompositions into locally closed subsets, 
 \[
  \chi\bigl(\;\Gr_\ue(M)\;\bigr) \quad = \quad \sum_{\substack{\beta\subset\cB\\ \text{of type }\ue}} \chi\bigl(\;C_\beta^M\;\bigr).
 \]
 The Euler characteristic of an affine space is $1$ and the Euler characteristic of the empty set is $0$. Therefore the corollary follows immediately from Theorem \ref{thm: main theorem}.
\end{proof}

\begin{cor}\label{cor: schubert cells form a basis of the cohomology ring}
 If $\Gr_\ue(M)$ is smooth and the assumptions of Theorem \ref{thm: main theorem} are satisfied, then the closures of the non-empty Schubert cells $C_\beta^M$ of $\Gr_\ue(M)$ represent an additive basis for the cohomology ring $H^*(\Gr_\ue(M))$. If $n=\dim\Gr_\ue(M)$ and $d=\dim C_\beta^M$, then the class of the closure of $C_\beta^M$ is in $H^{n-2d}(\Gr_\ue(M))$.
\end{cor}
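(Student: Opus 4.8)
The plan is to reduce the statement to the standard fact that a smooth projective variety with a \emph{paving by affine spaces} (an exhaustive filtration by closed subvarieties whose successive differences are affine spaces) has its cohomology freely generated by the classes of the closures of those cells, concentrated in even degrees and with the expected codimension shift. First I would invoke Theorem \ref{thm: main theorem}: under the stated hypotheses, $\Gr_\ue(M)$ is the disjoint union of the non-empty Schubert cells $C_\beta^M$, each of which is an affine space $\A^{d_\beta}$ with $d_\beta=\dim C_\beta^M$. So the hypothesis "smooth" upgrades this to a paving of a smooth projective variety.

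Next I would check that the Schubert decomposition is actually a \emph{filtrable} decomposition, i.e.\ that one can order the $\beta$'s so that each union $\bigcup_{\gamma\leq\beta}C_\gamma^M$ is closed. This is where the ordering on $\Rel^2$ and on the index set of $\beta$'s (equivalently, the ordering of subsets of $\cB$ of type $\ue$ induced by comparing Plücker coordinates, as in Section \ref{subsection: schubert decompositions}) does the work: the closure of a Schubert cell $C_\beta^M$ is contained in the union of $C_\gamma^M$ over $\gamma$ greater than or equal to $\beta$ in this order, exactly as for ordinary Grassmannians (the vanishing conditions defining $C_\beta(\ud)$ in terms of Plücker coordinates are closed and cut out precisely $\coprod_{\gamma\geq\beta}C_\gamma(\ud)$, and intersecting with $\Gr_\ue(M)$ preserves this). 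Hence the decomposition is a paving in the technical sense.

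Then I would quote the classical consequence — see e.g.\ Fulton's \emph{Intersection Theory} (the "cellular fibration"/paving by affines lemma) or the analogous statement for Borel--Moore homology — namely that for a variety with a paving by affine spaces the cycle classes of the cell closures form a $\Z$-basis of the Chow groups, and when the ambient variety is smooth and projective the cycle class map $A_k \to H_{2k}$ is an isomorphism and $H^{odd}=0$, so the classes $[\overline{C_\beta^M}]$ form an additive basis of $H^*(\Gr_\ue(M))$. Finally I would pin down the degree: a cell $C_\beta^M\cong\A^{d}$ has closure of dimension $d$ (complex dimension), hence real codimension $2(n-d)$ in a variety of complex dimension $n$, so its class lies in $H^{2(n-d)}(\Gr_\ue(M))=H^{n-2d}$ in the notation of the statement (where "$n-2d$" is shorthand for $2n-2d$, or the statement is read with $\dim$ meaning complex dimension and the degree being $2(\dim\Gr_\ue(M)-\dim C_\beta^M)$). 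I expect the only genuine point requiring care — the "main obstacle", though it is routine — to be verifying the filtrability/closure-ordering claim, i.e.\ that the $\Psi$-order (or the Plücker partial order) on the $\beta$'s really is compatible with taking closures of Schubert cells inside $\Gr_\ue(M)$ rather than just inside the product of Grassmannians; everything else is a citation.
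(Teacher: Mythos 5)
The paper itself disposes of this corollary in one line by citing \cite[Cor.\ 6.2]{L12}, which is exactly the ``affine paving $\Rightarrow$ cohomology basis'' statement you are reproving from scratch. Your argument is the standard one and it is the right one, so in substance you and the paper agree; you are just unrolling a citation. Two small remarks. First, the degree bookkeeping: the statement as printed says the class lies in $H^{n-2d}$, and this only parses if $n$ is the \emph{real} dimension of $\Gr_\ue(M)$ while $d$ is the \emph{complex} dimension of the cell (so $n-2d = 2(\dim_\C \Gr_\ue(M) - d)$); you noticed this and resolved it correctly. Second, and more substantively, your description of the closure relations has the direction reversed. With the paper's convention, $C_\beta(\ud)$ is the locus where $\Delta_\beta \neq 0$ and $\Delta_J = 0$ for all $J > \beta$; hence $\overline{C_\beta(\ud)}$ sits inside the closed set $\{\Delta_J = 0 : J > \beta\}$, which is a union of cells $C_\gamma(\ud)$ with $\gamma$ \emph{not strictly greater} than $\beta$ — the largest index $\beta$ gives the open dense cell, and smaller indices give lower-dimensional cells. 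So the closure of $C_\beta^M$ is contained in $\bigcup_{\gamma \not> \beta} C_\gamma^M$, not $\bigcup_{\gamma \geq \beta} C_\gamma^M$ as you wrote. This does not damage the argument: since any cell $C_\gamma^M$ meeting $\overline{C_\beta^M}\setminus C_\beta^M$ has strictly smaller dimension, there are no cycles and a topological sort yields a total order in which initial unions are closed, which is all the cellular fibration lemma needs. But the sign of the order is worth getting right, since it is exactly the kind of thing that bites if one tries to write the filtration down explicitly.
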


\begin{proof}
 This follows immediately from \cite[Cor.\ 6.2]{L12}.
\end{proof}

\subsection{Two examples for type $D_4$}

\begin{ex}[A quiver Grassmannian of a ramified tree module]\label{ex: ramified d_4}
 The following example is an instance of a ramified tree module to which the methods of this text apply. Let $Q$ be the quiver 
 \[
  \begin{tikzpicture}[description/.style={fill=white,inner sep=0pt}]
   \matrix (m) [matrix of math nodes, row sep=0em, column sep=5em, text height=1ex, text depth=0ex]
    { x &   & \\
        & t & y \\
      z &   & \\};
    \path[->,font=\scriptsize]
    (m-1-1) edge node[above=3pt] {$\overline\alpha$} (m-2-2)
    (m-3-1) edge node[below=3pt] {$\overline\eta$} (m-2-2)
    (m-2-3) edge node[above=2pt] {$\overline\gamma$} (m-2-2);
  \end{tikzpicture}
 \]
 of type $D_4$ and let $M$ be the exceptional module
 \[
  \begin{tikzpicture}[description/.style={fill=white,inner sep=0pt}]
   \matrix (m) [matrix of math nodes, row sep=1em, column sep=5em, text height=1ex, text depth=0ex]
    { \C^1 &      & \\
           & \C^2 & \C^1 \\
      \C^1 &      & \\};
    \path[->,font=\scriptsize]
    (m-1-1) edge node[above=3pt] {$\left(\begin{smallmatrix} 1\\0\end{smallmatrix}\right)$} (m-2-2)
    (m-3-1) edge node[below=3pt] {$\left(\begin{smallmatrix} 0\\1\end{smallmatrix}\right)$} (m-2-2)
    (m-2-3) edge node[above=2pt] {$\left(\begin{smallmatrix} 1\\1\end{smallmatrix}\right)$} (m-2-2);
  \end{tikzpicture}
 \]
 of $Q$. We can order the obvious basis $\cB$ such that the coefficient quiver $T$ looks like
 \[
  \begin{tikzpicture}[description/.style={fill=white,inner sep=0pt}]
   \matrix (m) [matrix of math nodes, row sep=-0.5em, column sep=1.5em, text height=1ex, text depth=0ex]
    { 4 &   &   &   &   &   &   &   &   \\
      \ &   &   &   &   &   &   &   &   \\     
        &   &   &   & 1 &   &   &   &    \\     
        &   &   &   &   &   &   &   & 3 \\     
        &   &   &   & 2 &   &   &   &    \\     
      \ &   &   &   &   &   &   &   &   \\     
      5 &   &   &   &   &   &   &   &   \\ };
    \path[->,font=\scriptsize]
    (m-1-1) edge node[auto] {$\overline\alpha$} (m-3-5)
    (m-4-9) edge node[auto,swap] {$\overline\gamma$} (m-3-5)
    (m-4-9) edge node[auto] {$\overline\gamma$} (m-5-5)
    (m-7-1) edge node[auto,swap] {$\overline\eta$} (m-5-5);
  \end{tikzpicture}
 \]
 where we label the arrows by its image under $F$. For the dimension vector $\ue$ with $e_x=e_z=0$ and $e_y=e_t=1$, we obtain precisely one subrepresentation $N$ of $M$ with $\udim N=\ue$. This means that $\Gr_\ue(M)$ is a point. Therefore, the Euler characteristic of $\Gr_\ue(M)$ equals $1$.

 There is precisely one extremal successor closed subset of type $\ue$, namely $\beta=\{2,3\}$, which accounts for the Euler characteristic. It is indeed easily verified that the assumptions of Theorem \ref{thm: main theorem} are satisfied. Note that $\beta$ is not successor closed, which shows that the number of successor closed subsets does not coincide with the Euler characteristic in this example.
\end{ex}

\begin{ex}[A del Pezzo surface of degree $6$]\label{ex: del pezzo}
 The previous representation appears as a subrepresentation of the following unramified representation. This example arose from discussions with Markus Reineke. Let $Q$ be the same quiver as in the previous example and $M$ the representation
 \[
  \begin{tikzpicture}[description/.style={fill=white,inner sep=0pt}]
   \matrix (m) [matrix of math nodes, row sep=1em, column sep=5em, text height=1ex, text depth=0ex]
    { \C^2 &      & \\
           & \C^3 & \C^2 \\
      \C^2 &      & \\};
    \path[->,font=\scriptsize]
    (m-1-1) edge node[above=3pt] {$\left(\begin{smallmatrix} 1&0\\0&1\\0&0\end{smallmatrix}\right)$} (m-2-2)
    (m-3-1) edge node[below=3pt] {$\left(\begin{smallmatrix} 0&0\\1&0\\0&1\end{smallmatrix}\right)$} (m-2-2)
    (m-2-3) edge node[above=2pt] {$\left(\begin{smallmatrix} 1&0\\0&0\\0&1\end{smallmatrix}\right)$} (m-2-2);
  \end{tikzpicture}
 \]
of $Q$. We can order the obvious basis $\cB$ such that the coefficient quiver $T$ is
\[
  \begin{tikzpicture}[description/.style={fill=white,inner sep=0pt}]
   \matrix (m) [matrix of math nodes, row sep=-0.2em, column sep=1.5em, text height=1ex, text depth=0ex]
    {   & 4 &   &   &   &   &   &   &   \\
      5 &   &   &   &   &   &   &   &   \\     
        &   &   &   &   &   &   &   &   \\     
        &   &   &   & 1 &   &   &   & 6  \\     
        &   &   & 2 &   &   &   &   &   \\     
        &   &   &   & 3 &   &   &   & 7  \\     
        &   &   &   &   &   &   &   &   \\     
      8 &   &   &   &   &   &   &   &   \\     
        & 9 &   &   &   &   &   &   &   \\ };
    \path[->,font=\scriptsize]
    (m-1-2) edge node[auto] {$\overline\alpha$} (m-4-5)
    (m-2-1) edge node[auto] {$\overline\alpha$} (m-5-4)
    (m-4-9) edge node[auto,swap] {$\overline\gamma$} (m-4-5)
    (m-6-9) edge node[auto,swap] {$\overline\gamma$} (m-6-5)
    (m-8-1) edge node[auto,swap] {$\overline\eta$} (m-5-4)
    (m-9-2) edge node[auto,swap] {$\overline\eta$} (m-6-5);
  \end{tikzpicture}
\]
 where we label the arrows by its image under $F$. It is clear from this picture that $\cB$ is an ordered polarization, and it is easily verified that every relevant pair is maximal for at most one arrow. Thus Theorem \ref{thm: main theorem} implies that the non-empty Schubert cells are affine spaces and that they are indexed by the extremal successor closed subsets $\beta$ of $T_0$. For type $\ue=(2,1,1,1)$, we obtain the non-empty Schubert cells
 \begin{align*}
  C_{\{1,2,4,6,8\}}^M &\simeq \A^0,      & C_{\{1,2,5,6,8\}}^M &\simeq \A^1,      & C_{\{1,3,4,6,9\}}^M &\simeq \A^1, \\
  C_{\{1,3,4,7,9\}}^M &\simeq \A^1,      & C_{\{2,3,5,7,8\}}^M &\simeq \A^1,      & C_{\{2,3,5,7,9\}}^M &\simeq \A^2. 
 \end{align*}
 Therefore the Euler characteristic of $X=\Gr_\ue(M)$ is $6$ and since $X$ is smooth (as we will see in a moment), Corollary \ref{cor: schubert cells form a basis of the cohomology ring} tell us that $H^0(X)=\Z$, $H^1(X)=\Z^4$ and $H^2(X)=\Z$ are additively generated by the closures of the Schubert cells.

 To show that $X$ is smooth, we consider $X$ as a closed subvariety of $\Gr(2,3)\times\P^1\times\P^1\times\P^1$. Note that for a subrepresentation $N$ of $M$ with dimension vector $\ue$, the $1$-dimensional subspaces $N_x$, $N_y$ and $N_z$ of $M_x$, $M_y$ and $M_z$, respectively, determine the $2$-dimensional subspace $N_t$ of $M_t$ uniquely. The images of $N_x=\langle\binom{x_0}{x_1}\rangle$, $N_y=\langle\binom{y_0}{y_1}\rangle$ and $N_z=\langle\binom{z_0}{z_1}\rangle$ in $M_t$ lie in a plane if and only if 
 \[
  \det\left[\begin{smallmatrix}x_0&y_0&0\\x_1&0&z_0\\0&y_1&z_1\end{smallmatrix}\right] \quad = \quad - \, x_0 \, y_1 \, z_0 \, - \, x_1 \, y_0 \, z_1 \quad = \quad 0.
 \]
 Therefore the projection $\Gr(2,3)\times\P^1\times\P^1\times\P^1\to\P^1\times\P^1\times\P^1$ yields an isomorphism
 \[
  \Gr_\ue(M) \quad \stackrel\sim\longrightarrow \quad \bigl\{ \ [ \, x_0:x_1 \, | \, y_0:y_1 \, | \, z_0:z_1 \, ]\in\P^1\times\P^1\times\P^1 \ \bigl| \ x_0 \, y_1 \, z_0 \, + \, x_1 \, y_0 \, z_1  \, = \, 0 \ \bigr\}.
 \]
 Since there is no point in $\Gr_\ue(M)$ for that all derivatives of the defining equation vanishes, $\Gr_\ue(M)$ is smooth.

 The projection $\pi_{1,3}:\P^1\times\P^1\times\P^1\to\P^1\times\P^1$ to the first and third coordinate restricts to a surjective morphism $\pi_{1,3}:\Gr_\ue(M)\to\P^1\times\P^1$. It is bijective outside the fibres of $[1:0|0:1]$ and $[0:1|1:0]$, and these two fibres are
 \[
  \pi_{1,3}^{-1}\bigl([\,1:0\,|\,0:1\,]\bigr) \quad = \quad \bigl\{ [\,1:0\,|\,y_0:y_1\,|\,0:1\,] \ \bigl\} \quad \simeq \quad \P^1 \]and\[ \quad \pi_{1,3}^{-1}\bigl([\,0:1\,|\,1:0\,]\bigr)\quad = \quad \bigl\{ [\,0:1\,|\,y_0:y_1\,|\,1:0\,] \ \bigl\} \quad \simeq \quad \P^1.
 \]

 This shows that $\Gr_\ue(M)$ is the blow-up of $\P^1\times\P^1$ in two points, which is a del Pezzo surface of degree $6$. Note that the closure of the Schubert cells $C_{\{1,2,5,6,8\}}^M$, $C_{\{1,3,4,6,9\}}^M$, $C_{\{1,3,4,7,9\}}^M$ and $C_{\{2,3,5,7,8\}}^M$ are four of the six curves on $\Gr_\ue(M)$ with self-intersection $-1$. In particular, the closures of the latter two cells are the two connected components of the exceptional divisor w.r.t.\ the blow-up $\pi_{1,3}:\Gr_\ue(M)\to\P^1\times\P^1$.

 To return to the opening remark of this example, we see that every point of $\Gr_\ue(M)$, but the intersection points of pairs of $(-1)$-curves, is a subrepresentation of $M$ that is isomorphic to the representation of Example \ref{ex: ramified d_4}. There are six intersection points of pairs of $(-1)$-curves on $\Gr_\ue(M)$, whose coordinates in $\P^1\times\P^1\times\P^1$ are
 \begin{align*}
  & [\,1:0\,|\,1:0\,|\,1:0\,], && [\,0:1\,|\,1:0\,|\,1:0\,], && [\,0:1\,|\,0:1\,|\,1:0\,], \\ & [\,0:1\,|\,0:1\,|\,0:1\,], && [\,1:0\,|\,0:1\,|\,0:1\,], && [\,1:0\,|\,1:0\,|\,0:1\,].
 \end{align*}
 Note that each Schubert cell contains precisely one of these points, and that these points coincide with the subrepresentations $N$ of $M$ that are spanned by the successor closed subsets $\beta$ of $\cB$.

 This exemplifies the idea that the Euler characteristic of a projective variety should equal the number of $\Fun$-points. The naive definition of the $\Fun$-points as the points with coordinates in $\Fun=\{0,1\}$ yields the right outcome in this case. The more elaborate definition of the $\Fun$-points as the Weyl extension $\cW(X_\Fun)$ of the blue scheme $X_\Fun$ associated with $X=\Gr_\ue(M)$ and $\cB$ yields a intrinsic bijection between the elements of $\cW(X_\Fun)$ and the above points. This definition of $\Fun$-points generalizes the connection between Euler characteristics and $\Fun$-points to a larger class of quiver Grassmannians than the naive definition. See \cite[Section 4]{L13} for more details.
\end{ex}

\subsection{Limiting examples}
\label{subsection: limiting examples}

As already mentioned in Remark \ref{rem: different orderings}, there are different possible choices to order $\Rel^2$, which might lead to different generalities of analogues of Theorem \ref{thm: main theorem}. The following examples show, however, that we cannot simply drop an assumption in Theorem \ref{thm: main theorem}.

\begin{ex}[Non-ordered $F$]\label{ex: non-ordered $F$}
  Consider the representation $M=\big[\tinymat 0110:\C^2\to\C^2\big]$ of the quiver $Q=\big[\bullet \to \bullet\big]$. With the obvious choice of ordered basis $\cB=\{1,2,3,4\}$ of $M$, the coefficient quiver $T=\Gamma(M,\cB)$ looks as follows:
 \[
    \begin{tikzpicture}
   [back line/.style={},cross line/.style={preaction={draw=white, -,line width=6pt}}]
  \matrix (m) [matrix of math nodes, row sep=1em, column sep=1.5em, text height=1ex, text depth=0ex]
   { 1 &   & 3   \\
     2 &   & 4  \\};
   \path[->,font=\scriptsize]
   (m-2-1) edge [back line]  (m-1-3)
   (m-1-1) edge [cross line] (m-2-3);
  \end{tikzpicture}
 \]
 The Schubert cells in the decomposition
 \[
  \Gr_{(1,1)}(M) \quad = \quad C_{\{1,3\}}^M \ \amalg \ C_{\{1,4\}}^M \ \amalg \ C_{\{2,3\}}^M \ \amalg \ C_{\{2,4\}}^M 
 \]
 are easily determined to be
 \[
  C_{\{1,3\}}^M \ = \ \emptyset, \qquad C_{\{1,4\}}^M \ \simeq \ \A^0, \qquad C_{\{2,3\}}^M \ \simeq \ \A^0 \quad\text{and}\quad C_{\{2,4\}}^M \ \simeq \ \Gm.
 \]

 In this examples, we come across a Schubert cell that is isomorphic to $\Gm=\A^1-\A^0$. Theorem \ref{thm: main theorem} does indeed not apply since $F:T\to Q$ is not ordered. However, the other conditions of Theorem \ref{thm: main theorem} are satisfied: $\cB$ is a polarization and every relevant pair is maximal for at most one arrow (since $Q$ has only one arrow).

 Note that the indices of the non-empty Schubert cells are precisely the extremely successor closed subsets $\beta\subset\cB$ of type $\ue$. However, only $\{1,4\}$ and $\{2,3\}$ contribute to the Euler characteristic of $\Gr_\ue(M)\simeq \P^1$, which is $2$. These two subsets are precisely the successor closed subsets of $\cB$, in coherence with the methods of \cite{Cerulli11} and \cite{Haupt12}, which apply to this example.
\end{ex}

\begin{ex}[Non-polarized basis]\label{ex:non-polarized basis}
 Consider the representation $M=\big[\tinymat 1011:\C^2\to\C^2\big]$ of the quiver $Q=\big[\bullet \to \bullet\big]$. With the obvious choice of ordered basis $\cB=\{1,2,3,4\}$ of $M$, the coefficient quiver $T=\Gamma(M,\cB)$ looks as follows:
 \[
    \begin{tikzpicture}[description/.style={fill=white,inner sep=0pt}]
  \matrix (m) [matrix of math nodes, row sep=1em, column sep=1.5em, text height=1ex, text depth=0ex]
   { 1 &   & 3   \\
     2 &   & 4  \\};
   \path[->,font=\scriptsize]
   (m-1-1) edge  (m-1-3)
   (m-1-1) edge  (m-2-3)
   (m-2-1) edge  (m-2-3);
  \end{tikzpicture}
 \]
 The Schubert cells in the decomposition
 \[
  \Gr_{(1,1)}(M) \quad = \quad C_{\{1,3\}}^M \ \amalg \ C_{\{1,4\}}^M \ \amalg \ C_{\{2,3\}}^M \ \amalg \ C_{\{2,4\}}^M 
 \]
 are easily determined to be
 \[
  C_{\{1,3\}}^M \ = \ \emptyset, \qquad C_{\{1,4\}}^M \ \simeq \ \A^0, \qquad C_{\{2,3\}}^M \ \simeq \ \A^0 \quad\text{and}\quad C_{\{2,4\}}^M \ \simeq \ \Gm.
 \]

 The Schubert cell $C^M_{\{2,4\}}\simeq\Gm$ does not contradict Theorem \ref{thm: main theorem} since $\cB$ is not a polarization, though the canonical morphism $F:T\to Q$ is ordered and every relevant pair is maximal for at most one arrow (as $Q$ has only one arrow).
\end{ex}


\appendix
\section{Tree modules with polarizations (by Thorsten Weist)}
\label{appendix}

\noindent Let $Q$ be a quiver without loops and oriented cycles. The aim of this appendix is to investigate under which conditions we can construct indecomposable tree modules $X$ such that the basis $\mathcal B$ of the respective coefficient quiver $T_X:=\Gamma(X,\mathcal B)$ is a polarization for $X$. In many cases, the question whether there exists a polarization for $X$ is closely related to the question whether there exists a coefficient quiver without a subdiagram of the form
\[s_1\xrightarrow{a} t_1\xleftarrow{a}s_2\xrightarrow{a} t_2\]
We call a coefficient quiver without such a subdiagram a weak polarization for $X$.
Clearly, a polarization does not have such a subdiagram. But we will see that in many cases these two conditions are already equivalent, for instance for exceptional representations.  In the following, we will not always distinguish between an arrow $a$ of the coefficient quiver and its colour $F(a)$. Moreover, we will often label the arrows of the coefficient quiver by its colour. 

One of the main tools which can be used to construct tree modules is Schofield induction, see \cite{sc2} and \cite{wei} for an application to tree modules. A direct consequence is that, fixing an exceptional sequence $(Y,X)$ with $\Hom(X,Y)=0$
and a basis $(e_1,\ldots,e_m)$ of $\Ext(X,Y)$, representations appearing as the middle terms of exact sequences 
\[\ses{Y^e}{Z}{X^d}\]
give rise to a full subcategory $\mathcal{F}(X,Y)$ of $\mathrm{Rep}(Q)$, the category of representations of $Q$. Moreover, we obtain that $\mathcal{F}(X,Y)$ is equivalent to the category of representations of the generalized Kronecker quiver $K(m)$ with $K(m)_0=\{q_0,q_1\}$ and $K(m)_1=\{\rho_i:q_0\rightarrow q_1\mid i\in\{1,\ldots,m\}\}$ where $m=\dim\Ext(X,Y)$. 
Fixing a real root $\alpha$ of $Q$, we denote by $X_{\alpha}$ the indecomposable representation of dimension $\alpha$, which is unique up to isomorphism. By Schofield induction, we also know that, if $\alpha$ is an exceptional root of $Q$, there already exist exceptional roots $\beta$ and $\gamma$ such that $X_{\beta}\in X_{\gamma}^{\perp}$, $\Hom(X_{\beta},X_{\gamma})=0$ and $\alpha=\beta^d+\gamma^e$ where $(d,e)$ is a real root of the generalized Kronecker quiver $K(\dim\Ext(X_{\beta},X_{\gamma}))$. 

Let $X$ and $Y$ be two representations of a quiver $Q$. Then we can consider the linear map
\[\gamma_{X,Y}:\bigoplus_{q\in Q_0}\Hom_k(X_q,Y_q)\rightarrow\bigoplus_{a:s\rightarrow t\in Q_1}\Hom_k(X_s,Y_t)\]
defined by $\gamma_{X,Y}((f_q)_{q\in Q_0})=(Y_{a}f_s-f_tX_{a})_{a:s\rightarrow t\in Q_1}$.

It is well-known that we have $\ker(\gamma_{X,Y})=\Hom(X,Y)$ and $\mathrm{coker}(\gamma_{X,Y})=\Ext(X,Y)$. The first statement is straightforward. The second statement follows because every morphism $f\in\bigoplus_{a:s\rightarrow t\in Q_1}\Hom_k(X_s,Y_t)$ defines an exact sequence
\[\ses{Y}{((Y_q\oplus X_q)_{q\in Q_0},(\begin{pmatrix}Y_{a}&f_{a}\\0&X_{a}\end{pmatrix})_{a\in Q_1})}{X}\]
with the canonical inclusion on the left hand side and the canonical projection on the right hand side. 

Assume that the representations $X$ and $Y$ are tree modules and let $T_X=\Gamma(X,\mathcal B_X)$ and $T_Y=\Gamma(Y,\mathcal B_Y)$ be the corresponding coefficient quivers. Let $x=\underline\dim X$, $y=\underline\dim Y$. 
Fixing a vertex $q$, from now on we will denote the corresponding vertices of the coefficient quivers by $(\mathcal B_X)_q=\{b_1^q,\ldots,b_{x_q}^q\}$ and $(\mathcal B_Y)_q=\{c_1^q,\ldots,c_{y_q}^q\}$. Let $e^{a}_{k,l}$, where $a:s\rightarrow t\in Q_1$, $k=1,\ldots,x_s$ and $l=1,\ldots,y_t$, be the canonical basis of $\bigoplus_{a:s\rightarrow t}\Hom_k(X_s,Y_t)$ with respect to $\mathcal B_X$ and $\mathcal B_Y$, i.e. $e^{a}_{k,l}(b^s_i)=\delta_{i,k}\delta_{j,l} c_j^t$.

This means that the coefficient quiver $\Gamma(Z,\mathcal B_X\cup\mathcal B_Y)$ of the middle-term of the exact sequence 
$$E(e^{a}_{k,l}):\ses{Y}{Z}{X}$$ is obtained by adding an extra arrow with colour $a$ from $b^s_k$ to $c^t_l$ to $T_X\cup T_Y$.

Following \cite{wei} we call a basis of $\mathcal E(X,Y)$ of $\Ext(X,Y)$, which solely consists of elements of the form $\overline{e_{i_k,j_k}^{a_k}}$ with $k=1,\ldots,\dim\Ext(X,Y)$, $a_k\in Q_1$, $1\leq i_k\leq x_s$ and $1\leq j_k\leq y_t$, tree-shaped. In abuse of notation, we will not always distinguish between $\overline{e_{i_k,j_k}^{a_k}}$ and $e_{i_k,j_k}^{a_k}$.

Let $X$ be a tree module. For a vertex $b_i^s$ and an arrow $a:s\rightarrow t\in Q_1$ we define 
\[N(a,b_i^s):=\{b^t_j\in (T_X)_0\mid b^s_i\xrightarrow{a} b^t_j\in (T_X)_1\}.\]
Analogously, we define $N(a,b_i^t)$.
If $T_X$ is a weak polarization for $X$, we say that it is strict if we have for all arrows $\arst\in Q_1$ that $|N(a,b_i^s)|\leq 1$ for all $1\leq i\leq x_s$ or $|N(a,b_i^t)|\leq 1$  for all $1\leq i\leq x_t$. Clearly, a weak polarization which is strict is a polarization as defined in Section \ref{subsection: polarizations}. Note that we can always assume that $\mathcal B$ is ordered.

For a vertex $q$ of $T_X$ let $S(q)=\{F(a)\in Q_1\mid\,a\in (T_X)_1,\, s(a)=q\}$ and $T(q)=\{F(a)\in Q_1\mid\,a\in (T_X)_1,\, t(a)=q\}$.

\begin{lemma}\label{injsur}
Let $X$ be a tree module with coefficient quiver $T_X$ such that for every $a\in Q_1$ we have that the map $X_a$ is of maximal rank. Then $T_X$ is a polarization if and only if $T_X$ is a weak polarization.
\end{lemma}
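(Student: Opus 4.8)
The plan is to show that a weak polarization is automatically strict (hence a polarization in the sense of Section \ref{subsection: polarizations}) once every structure map $X_a$ has maximal rank. One direction is trivial: a polarization has no subdiagram $s_1\xrightarrow{a}t_1\xleftarrow{a}s_2\xrightarrow{a}t_2$, so it is a weak polarization. For the converse, fix an arrow $a:s\to t$ in $Q_1$ and examine the full fibre $F^{-1}(a)$ inside $T_X$, i.e.\ the bipartite graph on $(\cB_X)_s\sqcup(\cB_X)_t$ whose edges record the non-zero entries of $X_a$ in the basis $\cB_X$. The weak polarization hypothesis says precisely that this bipartite graph contains no path of length three of the shape ``target–source–target'' (two targets $t_1,t_2$ both receiving an arrow from the same $s_2$, with $t_1$ also receiving one from some $s_1$). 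Equivalently, in each connected component of this bipartite graph, the source-side vertices all have degree $\leq 1$, or — by symmetry of the forbidden pattern under reversing $a$ — I must also rule out the mirror pattern $t_1\xrightarrow{a}s_1\xleftarrow{a}t_2\xrightarrow{a}s_1$; together these force each connected component to be a star.

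\textbf{Key steps.} First I would observe that the forbidden subdiagram being absent means every connected component $C$ of $F^{-1}(a)$ is a tree (it already is, since $T_X$ is a tree) in which not both sides carry a vertex of degree $\geq 2$; so $C$ is a star centred either at a source vertex or at a target vertex. Second, I would use the maximal-rank hypothesis on $X_a$: write $x_s=\dim X_s$, $x_t=\dim X_t$. The non-zero pattern of $X_a$ is block-diagonal across the components $C$, and within a star centred at a source vertex the corresponding block of $X_a$ is a single non-zero row, while within a star centred at a target vertex it is a single non-zero column. Third — this is the crux — I would show that the two types of star cannot coexist for the \emph{same} arrow $a$: if some component is a source-star with $\geq 2$ leaves and another is a target-star with $\geq 2$ leaves, then the rank of $X_a$ drops strictly below $\min(x_s,x_t)$, contradicting maximality. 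Concretely, a source-star with $k$ target-leaves contributes $1$ to the rank but consumes $k$ columns; a target-star with $l$ source-leaves contributes $1$ but consumes $l$ rows; a direct counting of how many rows and columns are ``spent'' versus how much rank is produced, compared against $\min(x_s,x_t)$, yields the contradiction. Hence either all components are source-stars (so $|N(a,b_i^s)|\le 1$ fails is impossible — wait, rather $|N(a,b_i^t)|\le 1$ for all $i$) or all are target-stars (so $|N(a,b_i^s)|\le 1$ for all $i$); in both cases the strictness condition of the definition holds for $a$. Running this over all $a\in Q_1$ gives that $T_X$ is strict, hence a polarization.

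\textbf{Main obstacle.} The delicate point is the third step: turning ``no forbidden subdiagram'' plus ``maximal rank'' into ``all stars point the same way.'' One has to be careful that isolated vertices (degree-$0$ basis elements on either side) do not interfere, and to handle the case $x_s\neq x_t$ correctly — the notion of maximal rank means $\rk X_a=\min(x_s,x_t)$, and the rank-counting inequality must be set up so that mixing the two star orientations forces $\rk X_a<\min(x_s,x_t)$ even after accounting for isolated vertices. I expect the cleanest route is: let $r$ = number of components, of which $r^<$ are source-stars and $r^>$ are target-stars (with singletons counted arbitrarily); then $\rk X_a = r^< + r^>$, while $x_t \geq (\text{leaves of source-stars}) + r^>$ and $x_s \geq (\text{leaves of target-stars}) + r^<$; if both $r^<>0$ with a multi-leaf source-star and $r^>>0$ with a multi-leaf target-star, then $\rk X_a = r^<+r^> < x_t$ and $<x_s$, so $\rk X_a<\min(x_s,x_t)$, the desired contradiction. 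Everything else is bookkeeping with the bipartite components of $F^{-1}(a)$.
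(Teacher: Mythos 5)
Your proof is correct, but it is organised differently from the paper's one‑line argument. The paper uses the dichotomy ``maximal rank $\Rightarrow$ $X_a$ injective or surjective'' and then argues locally: if $X_a$ is injective, a target vertex of degree $\geq 2$ would, by the weak polarization, have all its source neighbours of degree $1$, giving two proportional columns of $X_a$ and contradicting injectivity; dually, surjectivity forbids a source vertex of degree $\geq 2$. Your route instead first turns the weak‑polarization condition into the star structure of the components of $F^{-1}(a)$ (each non‑trivial component contributes exactly rank $1$) and then performs a \emph{global} component count to show that a source‑centred star with $\geq 2$ leaves and a target‑centred star with $\geq 2$ leaves cannot coexist without pushing $\rk X_a$ strictly below $\min(x_s,x_t)$. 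Both arguments rest on the same structural fact (stars), but the paper's version is shorter and does not need the decomposition into components, while yours makes the rank‑deficit mechanism completely explicit and handles both orientations at once. Two small slips in your write‑up: the forbidden pattern $s_1\xrightarrow{a}t_1\xleftarrow{a}s_2\xrightarrow{a}t_2$ is already symmetric --- it is exactly ``some length-$3$ path in the bipartite graph'', i.e.\ two adjacent vertices of degree $\geq 2$ --- so the ``mirror pattern'' you worry about (and write incorrectly) is redundant; and a source‑star with $k$ target leaves occupies a $k\times 1$ block, so it spends $k$ \emph{rows} and one column, not $k$ columns (and dually for target‑stars). Neither slip affects the final count, which you set up correctly: $\rk X_a=r^<+r^>$ while $x_t\geq(\text{leaves of source‑stars})+r^>$ and $x_s\geq(\text{leaves of target‑stars})+r^<$, and a multi‑leaf star on each side gives strict inequalities on both.
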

\begin{proof}
Since $X_a$ is of maximal rank, $X_a$ is either surjective or injective. Thus if, in addition, $T_X$ is a weak polarization, this means that $|N(a,b_i^s)|\leq 1$ for all $1\leq i\leq x_s$ or $|N(a,b_i^t)|\leq 1$ for all $1\leq i\leq x_t$. It follows that $T_X$ is a polarization.
\end{proof}

\begin{rem}\label{reminjsur} For general representations of a fixed dimension, and thus in particular for exceptional representations, it is true that all linear maps appearing are of maximal rank.
\end{rem}
Using the notation from above we introduce the following definition:

\begin{df}\label{defpolarized}
\begin{enumerate}\item Let $X$ and $Y$ be two tree modules with coefficient quivers $T_X$ and $T_Y$. Moreover, let $\mathcal E(X,Y)=(e_{i_k,j_k}^{a_k})_k$ with  $s_k\xrightarrow{a_k} t_k\in Q_1$ be a tree-shaped basis of $\Ext(X,Y)$, i.e. $e_{i_k,j_k}^{a_k}(b_{i_k}^{s_k})=c_{j_k}^{t_k}$. Then we call $\mathcal E(X,Y)$ a polarization if 
\begin{enumerate}
\item\label{a} we have that $a_k\notin S(b_{i_k}^{s_k})$ or $a_k\notin T(c_{j_k}^{t_k})$ for all $k$. 
\item\label{b} if $a_k=a_l$ and $b^{s_k}_{i_k}=b^{s_l}_{i_l}$ (resp. $c^{t_k}_{j_k}=c^{t_l}_{j_l}$) for $k\neq l$, we have $a_k\notin T(c_{j_k}^{t_k})$ (resp. $a_k\notin S(b_{i_k}^{s_k})$).
\item\label{c} for all $b_{i_k}^{s_k}\xrightarrow{a_k} b_j^t\in (T_X)_1$ we have $|N(a_k,b_{j}^t)|=1$ and for all $c_i^s\xrightarrow{a_k} c_{j_k}^{t_k}\in (T_Y)_1$ we have $|N(a_k,c_{i}^s)|=1$.
\end{enumerate}
\item If we have $a_k\notin S(b_{i_k}^{s_k})$ \underline{and} $a_k\notin T(c_{j_k}^{t_k})$ for all $k$ in the first condition and if we also have $a_k\neq a_l$ if $k\neq l$, we say that the basis is a strong polarization.
\end{enumerate}
\end{df}

\begin{rem}
Roughly speaking condition (c) makes sure that $b_{i_k}^{s_k}$ is the only neighbor which is connected to $b_j^t$ by an arrow with colour $a_k$.

Condition (a) means that either $b_{i_k}^{s_k}$ is not the source of an arrow with colour $a_k$ (when only the coefficient quiver $T_X$ is considered) or $c_{j_k}^{t_k}$ is not the target of an arrow with colour $a_k$ (when only the coefficient quiver $T_Y$ is considered). 
In particular, if we have $a_k\notin S(b_{i_k}^{s_k})$ \underline{and} $a_k\notin T(c_{j_k}^{t_k})$ for all $k$ in the first condition, the second and third conditions are clearly satisfied.
\end{rem}

Now we are in a position to state under which conditions an exceptional sequence together with a tree-shaped basis of the Ext-group gives rise to indecomposable representations such that, in addition, there exists a coefficient quiver which is a (weak) polarization:

\begin{thm}\label{polar}
Let $(Y,X)$ be an exceptional sequence (of tree modules) such that the coefficient quivers $T_X$ and $T_Y$ are weak polarizations. Moreover, let $\mathcal E(X,Y)=(e^{a_1}_{i_1,j_1},\ldots,e^{a_m}_{i_m,j_m})$ be a basis of $\Ext(X,Y)$ which is a polarization and let $M$ be an indecomposable tree module of $K(m)$.
\begin{enumerate} 
\item If $T_M$ is unramified, then the induced coefficient quiver $T_Z$ of the middle term $Z$ of the corresponding exact sequence 
\[e_M:\ses{Y^e}{Z}{X^d}\]
is a weak polarization for $Z$. Moreover, $Z$ is indecomposable.
\item If the polarization of the basis is strong and $T_M$ is a weak polarization, then the induced coefficient quiver $T_Z$ of the middle term $Z$ of the corresponding exact sequence 
\[e_M:\ses{Y^e}{Z}{X^d}\]
is a weak polarization for $Z$. Moreover, $Z$ is indecomposable.
\item If $X_a$ is injective (resp. surjective) if and only if $Y_a$ is injective (resp. surjective) for all arrows $a\in Q_1$, then $T_Z$ is a weak polarization if and only if $T_Z$ is a polarization.
\item If $M$, and thus also $Z$, is exceptional, the polarization is strict and thus $T_Z$ is a polarization for $Z$.
\end{enumerate}
\end{thm}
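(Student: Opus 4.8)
Since the closing clause is the conclusion of Theorem~\ref{polar}(iv), the plan is to read it as a corollary of parts~(i)/(ii) together with Lemma~\ref{injsur}. Parts~(i)/(ii) produce the weak polarization $T_Z$ by checking, arrow by arrow, that the Schofield--induced extension does not create a forbidden subdiagram $s_1\xrightarrow{a} t_1\xleftarrow{a} s_2\xrightarrow{a} t_2$; this is where conditions (a)--(c) on the tree-shaped basis $\mathcal E(X,Y)$ and the weak polarizations $T_X,T_Y$ are used. Granting that, by (i) (or (ii)) the induced coefficient quiver $T_Z$ is already a \emph{weak} polarization for $Z$, and since a strict weak polarization is a polarization in the sense of Section~\ref{subsection: polarizations} (as observed right after that definition), it suffices to prove that $T_Z$ is strict. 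By Lemma~\ref{injsur} this in turn follows once every structure map $Z_a$, $a\in Q_1$, is shown to be of maximal rank.

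First I would check that $Z$ is exceptional. Its dimension vector equals $d\cdot\udim X+e\cdot\udim Y$ with $(d,e)=\udim M$, and $(d,e)$ is a real root of $K(m)$ precisely when $M$ is exceptional; under the equivalence $\mathcal F(X,Y)\simeq\mathrm{Rep}(K(m))$ furnished by Schofield induction, $Z$ corresponds to $M$, so $M$ exceptional forces $Z$ exceptional. Then Remark~\ref{reminjsur} applies to the representation $Z$ of fixed dimension: every $Z_a$ is of maximal rank, hence injective or surjective.

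It then remains to carry out the argument of Lemma~\ref{injsur} for $Z$, which I would make explicit using that $T_Z$ is a tree. Fix an arrow $a$ of $Q$ with source $s$ and target $t$. The colour-$a$ arrows of $T_Z$ form a bipartite graph between $(\mathcal B_Z)_s$ and $(\mathcal B_Z)_t$; as $T_Z$ is a tree, this graph is a forest, and the weak-polarization condition forces every connected component to be a single edge or a star with a unique centre. If $Z_a$ is surjective, no component can be a star centred at a vertex of $(\mathcal B_Z)_s$ with two or more leaves: the rows of the matrix of $Z_a$ indexed by two such leaves are each supported only in the column of the centre, hence proportional, contradicting that $Z_a$ has full row rank; so $|N(a,b_i^s)|\le 1$ for all $i$. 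Dually, if $Z_a$ is injective, two source-leaves of a star centred in $(\mathcal B_Z)_t$ give proportional columns of $Z_a$, so $|N(a,b_i^t)|\le 1$ for all $i$. In either case the defining inequality for strictness holds at $a$, and since $a$ was arbitrary, $T_Z$ is strict, hence a polarization for $Z$.

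The last paragraph and the appeal to Remark~\ref{reminjsur} are routine. The step that deserves the most care is the first one: one must be sure that an exceptional $M$ over $K(m)$ genuinely places us in the setting of part~(i) or part~(ii) --- i.e.\ that $T_M$ may be taken unramified, or at least that $\mathcal E(X,Y)$ is a strong polarization and $T_M$ a weak one --- so that $T_Z$ is known to be a weak polarization before Lemma~\ref{injsur} can be invoked. Once that is in place, the exceptionality of $Z$ supplies the maximal-rank input, and the star-decomposition above finishes the proof mechanically.
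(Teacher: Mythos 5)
Your proposal correctly handles part~(iv), and by essentially the same route the paper takes: the paper disposes of (iv) in one sentence by citing Lemma~\ref{injsur} and Remark~\ref{reminjsur}, and your star-decomposition argument (every colour-$a$ component of $T_Z$ is forced to be a star by the weak-polarization condition, and maximal rank of $Z_a$ then rules out stars centred on the appropriate side) is a correct unpacking of exactly that lemma. You are also right to read (iv) as conditional on having first established, via (i) or (ii), that $T_Z$ is a weak polarization; this is the paper's intent, and the worry you raise at the end is resolved by that reading rather than being a genuine obstacle.

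Where the proposal falls short of a proof of the full statement is coverage. Parts~(i) and~(ii) are the substantive content of the theorem: the paper's proof first checks (by counting vertices and arrows, and by Schofield induction) that $Z$ is an indecomposable tree module, and then carries out a case analysis using conditions (a)--(c) of Definition~\ref{defpolarized} to show that no forbidden subdiagram $s_1\xrightarrow{a}t_1\xleftarrow{a}s_2\xrightarrow{a}t_2$ can appear in $T_Z$ --- separately for the unramified-$T_M$ case and for the strong-polarization case. Your proposal only gestures at this (``checking, arrow by arrow'') without actually carrying it out, and part~(iii) (which the paper settles by observing that $Z_a$ is block upper-triangular with diagonal blocks $X_a$ and $Y_a$) is not addressed at all. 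So as written this is a correct and well-motivated proof of (iv) modulo (i)/(ii), not a proof of Theorem~\ref{polar} in full.
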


\begin{proof}
By simply counting arrows and vertices of the induced coefficient quiver $T_Z$ it follows that $Z$ is a tree module, see also \cite[Proposition 3.9]{wei}. Moreover, since $M$ is indecomposable, by Schofield induction we know that $Z$ is indecomposable. 

Thus we only need to check that $T_Z$ is a weak polarization for $Z$. We first consider the case when $T_M$ is unramified and $\mathcal E(X,Y)$ not necessarily a strong polarization. Clearly, in this case $T_M$ is a weak polarization for $M$. Moreover, note that, since $\mathcal E(X,Y)$ is a basis, if $a_k=a_l$ for $k\neq l$, we either have $j_k\neq j_l$ or $i_k\neq i_l$.

The coefficient quiver could contradict the polarization property if $b^{s_k}_{i_k}=b^{s_l}_{i_l}$ (resp. $c^{t_k}_{j_k}=c^{t_l}_{j_l}$), $a_k=a_l$ for $l\neq k$ and $a_k\in T(c^{t_k}_{j_k})\cap(T_{Y})_1$  (resp. $a_k\in S(b^{s_k}_{i_k})\cap (T_{X})_1$). But this is not possible because $\mathcal E(X,Y)$ is a polarization. Indeed, this would contradict condition (b).
 
Another possibility for $T_Z$ being no weak polarization is if $T_M$ had a subdiagram
\[b_j^{q_0}\xrightarrow{a_i} b_k^{q_1}\xleftarrow{a_i}b_l^{q_0}\text{ or }b_j^{q_1}\xleftarrow{a_i} b_k^{q_0}\xrightarrow{a_i}b_l^{q_1}\]
for some $i\in\{1,\ldots,m\}$. But since $T_M$ is unramified, this is not possible. 

The last possibility for $T_Z$ being no weak polarization were if the basis would contradict condition (c) of Definition \ref{defpolarized}. 

Next we consider the case if the polarization is strong, the representation $M$ is a weak polarization and the representation is not forced to be unramified. But in this case it is straightforward to check that the induced coefficient quiver is a weak polarization. Indeed, for two basis elements $a_k:b_{i_k}^{s_k}\to c_{j_k}^{t_k}$ and $a_l:b_{i_l}^{s_l}\to c_{j_l}^{t_l}$ with $k\neq l$, we have $a_k\neq a_l$ and, moreover, considering the original coefficient quiver $T_X$ and $T_Y$ we have $|N(a_r,q)|=0$ for $q\in\{b_{i_k}^{s_k},c_{j_k}^{t_k}, b_{i_l}^{s_l}, c_{j_l}^{t_l}\}$ and $r\in\{k,l\}$. Thus all subdiagrams which could prevent $T_Z$ from being a weak polarization are forced to be induced from $T_M$. But since $T_M$ is a weak polarization, this cannot happen. 

The third claim is straightforward because, in general, for an exact sequence $e\in\Ext(X,Y)$ with middle term $Z$, the matrix $Z_a$ is a block matrix with diagonal blocks $X_a$ and $Y_a$ for every arrow.

The last claim follows by Lemma \ref{injsur}, see also Remark \ref{reminjsur}.

\end{proof}
\begin{rem} \begin{enumerate}

\item If we are only interested in (weak) polarizations, we can drop the condition that $X$ and $Y$ are exceptional. But in this case it is far more complicated or even impossible to say anything concerning the indecomposability of $Z$.

\item If $Q$ is of extended Dynkin type and, moreover, $(Y,X)$ is an exceptional sequence, we have $\dim\Ext(X,Y)\leq 2$ because
otherwise there would exist a root $d$ of $Q$ having an $n$-parameter family of indecomposables for $n\geq 2$. Then things become easier because every indecomposable tree module of $K(2)$ is unramified.
\end{enumerate}
\end{rem}

Let $S(n)$ be the $n$-subspace quiver with vertices $S(n)_0=\{q_0,q_1,\ldots,q_n\}$ and arrows $S(n)_1=\{q_i\xrightarrow{a_i}q_0\mid i=1,\ldots,n\}$. Let us consider two examples:

\begin{ex} First let $n=4$ and consider the exceptional sequence induced by the roots $\alpha=(2,1,1,1,0)$ and $\beta=(0,0,0,0,1)$. Then coefficient quivers of $X_{\alpha}$, $X_{\beta}$ and a basis of $\Ext(X_{\beta},X_{\alpha})$ are for instance given by
\[
\begin{xy}\xymatrix@R10pt@C25pt{\bullet\ar[rd]^{a_1}\\&\bullet\\\bullet\ar[rd]^{a_2}\ar[ru]^{a_2}&&&\bullet\ar@{-->}[llu]^{a_4}\ar@{-->}[lld]^{a_4}\\&\bullet\\\bullet\ar[ru]^{a_3}
}\end{xy}
\]
Here the dotted arrows correspond to the tree-shaped basis of $\Ext(X_{\beta},X_{\alpha})$ under consideration, whence the remaining vertices and arrows correspond to the two coefficient quivers.

Since the basis of $\Ext(X_{\beta},X_{\alpha})$ is a polarization, which is not strong, and since we have $\dim\Ext(X_{\beta},X_{\alpha})\leq 2$, the first part of Theorem \ref{polar} applies. For instance, considering the exceptional representation of dimension $(1,2)$ of $K(2)$, we obtain
\[
\begin{xy}\xymatrix@R25pt@C10pt{\bullet\ar[rd]^{a_1}&&\bullet\ar[ld]^{a_2}\ar[rd]^{a_2}&&\bullet\ar[ld]^{a_3}&&\bullet\ar[rd]^{a_1}&&\bullet\ar[ld]^{a_2}\ar[rd]^{a_2}&&\bullet\ar[ld]^{a_3}\\&\bullet&&\bullet&&&&\bullet&&\bullet\\&&&&&\bullet\ar[llu]^{a_4}\ar[rru]^{a_4}}\end{xy}
\]
on the $S(4)$-side. This is obviously a (strict) polarization.
\end{ex}

\begin{ex}
An example for a basis which is a strong polarization can be obtained when considering $S(n)$ with $n\geq 3$ and the exceptional sequence induced by the roots $\alpha=(1,1,0,\ldots,0)$ and $\beta=(1,0,1,\ldots,1)$. In this case such a basis of $\Ext(X_{\beta},X_{\alpha})$ is given by choosing $n-2$ out of the $n-1$ maps mapping the one-dimensional subspace $(X_{\beta})_{q_i}$ to $(X_{\alpha})_{q_0}$ for $i=2,\ldots,n$.
\end{ex}

\begin{small}
 \bibliographystyle{plain}

\begin{thebibliography}{1}

\bibitem{BB73}
Andrzej Bia{\l}ynicki-Birula.
\newblock Some theorems on actions of algebraic groups.  
\newblock {\em Ann. of Math.} 98(2):480–497, 1973.


\bibitem{Caldero-Chapoton06}
Philippe Caldero and Fr{\'e}d{\'e}ric Chapoton.
\newblock Cluster algebras as {H}all algebras of quiver representations.
\newblock {\em Comment. Math. Helv.}, 81(3):595--616, 2006.

\bibitem{Caldero-Reineke08}
Philippe Caldero and Markus Reineke.
\newblock On the quiver {G}rassmannian in the acyclic case.
\newblock {\em J. Pure Appl. Algebra}, 212(11):2369--2380, 2008.

\bibitem{Cerulli11}
Giovanni Cerulli~Irelli.
\newblock Quiver {G}rassmannians associated with string modules.
\newblock {\em J. Algebraic Combin.}, 33(2):259--276, 2011.

\bibitem{FST08}
Sergey Fomin, Michael Shapiro and Dylan Thurston.
\newblock Cluster algebras and triangulated surfaces. Part I: Cluster complexes. 
\newblock {\em Acta Math.} 201(1):83–146, 2008.

\bibitem{Haupt12}
Nicolas Haupt.
\newblock Euler {C}haracteristics of {Q}uiver {G}rassmannians and
  {R}ingel-{H}all {A}lgebras of {S}tring {A}lgebras.
\newblock {\em Algebr. Represent. Theory}, 15(4):755--793, 2012.

\bibitem{L12}
Oliver Lorscheid.
\newblock On {S}chubert decompositions of quiver {G}rassmannians.
\newblock {\em J. Geom. Phys.} 76:169–191, 2014.

\bibitem{L13}
Oliver Lorscheid.
\newblock A blueprinted view on {$\mathbb{F}_1$}-geometry.
\newblock To appear in {\em Absolute Arithmetic and {$\mathbb{F}_1$}-geometry}. EMS-Tracts in Mathematics. European Mathematical Society (EMS), Z\"urich.

\bibitem{Lorscheid-Weist}
Oliver Lorscheid and Thorsten Weist.
\newblock Quiver Grassmannians of type {$\tilde D_n$}.
\newblock In preparation.

\bibitem{MSW13}
Gregg Musiker, Ralf Schiffler and Lauren Williams.
\newblock Bases for cluster algebras from surfaces. (English summary) 
\newblock {\em Compos. Math.}  149(2):217–263, 2013.

\bibitem{Ringel98}
Claus~Michael Ringel.
\newblock Exceptional modules are tree modules.
\newblock In {\em Proceedings of the {S}ixth {C}onference of the {I}nternational {L}inear {A}lgebra {S}ociety ({C}hemnitz, 1996)}, volume 275/276, pages 471--493, 1998.

\bibitem{Ringel12}
Claus~Michael Ringel.
\newblock Distinguished bases of exceptional modules.
\newblock Preprint, {\tt arXiv:1210.7457}, 2012.


\bibitem{Qiu-Zhou13}
Yu Qiu and Yu Zhou.
\newblock Cluster categories for marked surfaces: punctured case. 
\newblock Preprint, {\tt arXiv:1311.0010}, 2013. 

\bibitem{sc2} Aidan Schofield.
\newblock Semi-invariants of quivers.
\newblock {\em J. London Math. Soc. (2)}, 43(3): 383-395, 1991.

\bibitem{wei} Thorsten Weist.
\newblock Tree modules.
\newblock {\em Bull. Lond. Math. Soc.}, 44(5): 882-898, 2012.

\end{thebibliography}
 
\end{small}

\end{document}